\newtheorem{theorem}{Theorem}
\newtheorem{proposition}{Proposition}
\newtheorem{definition}{Definition}
\newtheorem{lemma}{Lemma}
\newcommand{\cD}{{\mathcal D}}
\newcommand{\cF}{{\mathcal F}}
\newcommand{\bbE}{\mathbb{E}}
\newcommand{\bbP}{\mathbb{P}}
\newcommand{\bbQ}{\mathbb{Q}}
\newcommand{\bbR}{\mathbb{R}}
\newcommand{\bbZ}{\mathbb{Z}}
\begin{document}

\title{Stationary max-stable processes with the Markov property}

\author{Cl\'ement Dombry\footnote{Universit\'e de
Poitiers, Laboratoire de Mathématiques et Applications, UMR CNRS 7348, T\'el\'eport 2, BP 30179, F-86962 Futuroscope-Chasseneuil cedex,
France.  Email: Clement.Dombry@math.univ-poitiers.fr}\ \ and Fr\'ed\'eric Eyi-Minko\footnote{Universit\'e de
Poitiers, Laboratoire de Mathématiques et Applications, UMR CNRS 7348, T\'el\'eport 2, BP 30179, F-86962 Futuroscope-Chasseneuil cedex,
France.  Email: Frederic.Eyi.minko@math.univ-poitiers.fr}}
\date{}
\maketitle

\begin{abstract}
We prove that the class of discrete time stationary max-stable process  satisfying the Markov property is equal, up to time reversal, to the class of stationary max-autoregressive processes of order $1$. A similar statement is also proved for continuous time processes.
\end{abstract}
\vspace{0.5cm}
\noindent
{\bf Key words:} max-stable process, Markov property, max-autoregressive process.
\noindent
{\bf AMS Subject classification. Primary:} 60G70,  {\bf Secondary:} 60J05.

\section{Introduction}
Given a class of stochastic processes, a natural and important question is to determine conditions ensuring the Markov property. 
For example, a zero mean Gaussian process $X$ on $T=\bbZ$ or $\bbR$ satisfies the Markov property if and only if
\[
 \bbE[X(t_2) \mid X(t), t\leq t_1]=\bbE[X(t_2) \mid X(t_1)]\quad \mbox{for all}\ t_1,t_2\in T, t_1\leq t_2.
\]
It is also well known that if $X$ is a {\it stationary} zero mean Gaussian process on $T$ satisfying the Markov property, 
then $X$ must be the Ornstein-Uhlenbeck process with covariance function
\[
 \bbE[X(t_1)X(t_2)]=\bbE[X(0)^2]e^{-\lambda|t_2-t_1|}
\]
for some $\lambda\in [0,+\infty]$. The case $\lambda=0$ corresponds to a constant process, the case $\lambda=+\infty$ to a Gaussian white noise.

Within the class of symmetric $\alpha$-stable (S$\alpha$S) processes, the situation is much more complicated (see Adler {\it et al.} \cite{ACS90}).
No complete characterization of S$\alpha$S Markov processes is known but only necessary or sufficient conditions. One can construct at least two classes
of stationary Markov S$\alpha$S processes, the right and the left S$\alpha$S Ornstein-Uhlenbeck processes.

The purpose of this paper is to study the Markov property within the class of max-stable random processes.
Without loss of generality, we shall consider only {\it simple} max-stable processes defined as follows. 
\begin{definition}
A random process $\eta=(\eta(t))_{t\in T}$ 
is said to be simple max-stable if it has $1$-Fréchet marginals 
\[
\bbP[\eta(t)\leq y]=\exp(-1/y),\quad \mbox{for all}\ t\in T,\ y>0, 
\]
and satisfies the following max-stability property: 
\[
n^{-1} \bigvee_{i=1}^n \eta_i \stackrel{d}= \eta, \quad \mbox{for all}\ n\geq 1, 
\]
where  $(\eta_i)_{i\geq 1}$ are independent copies of $\eta$ and $\bigvee$ denotes pointwise maximum and $\stackrel{d}=$ denotes the equality of distributions.
\end{definition}
Our main result is a complete characterization of the class of stationary simple max-stable Markov processes on $T=\bbZ$ or $\bbR$.
Our analysis relies on a recent paper \cite{DEM13} where explicit formulas for the conditional distributions of max-stable processes are proved. 
This helps clarifying the notion of (Markovian) dependence for max-stable processes.

Related works by Tavares \cite{Tav80}, Alpuim \cite{Alp89}, Alpuim {\it et al.} \cite{AlpAth90} characterized stationary max-AR(1)
processes, and Alpuim {\it et al.} \cite{AlpCatHus95}  study max-autoregressive processes and the Markov property in extreme value theory. Extremes of Markov chains  have been considered by Perfekt \cite{P94} and Smith \cite{Sm92}, while Smith {\it et al.} \cite{STC97} consider Markov chain models for threshold exceedances (see the monograph by Beirlant {\it et al.} section 10.4 for further discussion on extremes and Markov chains).

Well known examples of discrete time simple max-stable processes satisfying  the Markov property are maximum-autoregressive processes of order $1$. The max-AR(1) process with parameter  $a\in [0,1]$
is defined as follows: consider $(F_n)_{n\in\bbZ}$, a sequence of i.i.d. random variables with standard $1$-Fréchet distribution, and set   
\begin{equation}\label{eq:maxAR}
X_a(t)= \left\lbrace 
\begin{tabular}{ll}
  $\bigvee \limits_{n \leq t} (1-a) a^{t-n} F_{n}$ & if $a\in [0,1)$\\
  $X_1(t)\equiv F_0$  &if $a=1$
\end{tabular} \right. 
,\quad t\in \bbZ.
\end{equation}
The max-AR(1) process $X_a$ is a stationary simple max-stable process satisfying 
\[
X_a(t+1)=\max(aX_a(t),(1-a)F_{t+1}),\quad t\in\bbZ.
\]
This relation explains the term max-autoregressive and implies that  $X_a$ satisfies the Markov property.
The associated Markov kernel $K_a(x,\cdot)$ is defined by
\[
K_a(x,dy)=\bbP[X_a(t+1)\in dy \ \mid\ X_a(t)=x]
\]
and is easily computed: denoting by $\delta_z$ the Dirac measure at point $z$, it holds
\[
K_a(x,dy)=e^{-(1-a)/(ax)}\delta_{ax}(dy)+(1-a)y^{-2}e^{-(1-a)/y}1_{\{y>ax\}}dy.
\]
Note that the parameter $a\in[0,1]$ tunes the strength of the dependence, ranging from independence when $a=0$ to complete dependence when $a=1$.
It can be retrieved from the support of the law of $X_a(t+1)/X_a(t)$ since
\[
\mathrm{supp}(X_a(t+1)/X_a(t))=[a, +\infty) \quad \mbox{if}\  a\in [0,1),
\]
and the support is reduced to $\{1\}$ in the case $a=1$.

It is well known that if $X=(X(t))_{t\in\bbZ}$ is a stationary Markov chain, then the time reversed process $\check X=(X(-t))_{t\in\bbZ}$ 
is also a stationary Markov chain. Hence, time reversed max-autoregressive processes are further examples of stationary max-stable Markov processes. 
More precisely, if $X_a$ is the max-AR(1) process \eqref{eq:maxAR}, the associated time reversed process $\check X_a$ is given by
\begin{equation}\label{eq:TRmaxAR}
\check X_a(t)= \bigvee_{n \geq t }  (1-a) a^{n-t} \check F_{n},\quad t\in \bbZ,
\end{equation}
where $(\check F_n)_{n\in\bbZ}=(F_{-n})_{n\in\bbZ}$ are i.i.d. random variables with standard $1$-Fréchet distribution. 
Clearly, $\check X_a$ satisfies the backward max-autoregressive relation
\[
\check X_a(t-1)=\max(a\check X_a(t),(1-a)\check F_{t-1}),\quad t\in\bbZ.
\]
The Markov kernel associated to $\check X_a$ is given by
\begin{eqnarray*}
\check K_a(y,dx)&=&\bbP[\check X_a(t+1)\in dx \ \mid\ \check X_a(t)=y]\\
&=& a\delta_{y/a}(dx)+(1-a)x^{-2}e^{-1/x+a/y}1_{\{x<y/a\}}dx.
\end{eqnarray*}
Note that the Markov kernels $K_a$ and $\check K_a$ are related by the equilibrium relation
\[
\pi(dx) K_a(x,dy)=\pi(dy)\check K_a(y,dx)
\]
where $\pi(dx)=x^{-2}e^{-1/x}1_{\{x>0\}}dx$ is the stationary distribution. 
It is easily seen that for $a=0$ or $a=1$,  $X_a$ and $\check X_a$ have the same distribution. This means that  the max-AR(1) process  $X_a$ is reversible if $a=0$ or $a=1$. 
This is no longer  the case when $a\in (0,1)$ since
\[
 \mathrm{supp}(\check X_a(t+1)/\check X_a(t))=[0, 1/a] \quad \mbox{if}\  a\in (0,1).
\]
The purpose of the present paper 
is the characterization of all  stationary simple max-stable processes satisfying the Markov property.
\begin{theorem}\label{theo1}
Any stationary  simple max-stable process $\eta=(\eta(t))_{t\in\bbZ}$ satisfying the Markov property is equal in distribution to a  max-AR(1) process \eqref{eq:maxAR} or to a time-reversed max-AR(1) process \eqref{eq:TRmaxAR}.
\end{theorem}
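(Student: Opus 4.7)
The plan is to combine the Poisson point process representation of simple max-stable processes with the conditional distribution formulas of \cite{DEM13} to convert the Markov property into a rigidity statement about the spectral functions. Write $\eta(t)=\bigvee_{i\ge 1}U_iY_i(t)$, where $\{(U_i,Y_i)\}_{i\ge 1}$ is a Poisson point process on $(0,\infty)\times\cF$ with intensity $u^{-2}du\otimes\nu(dy)$, for $\cF$ a space of nonnegative functions on $\bbZ$ normalised so that $\int y(t)\,\nu(dy)=1$ for each $t\in\bbZ$. The stationarity of $\eta$ is encoded in a shift-invariance property of the underlying Poisson intensity, and the finite-dimensional distributions of $\eta$ are recovered from
\[
-\log\bbP[\eta(t_1)\le x_1,\ldots,\eta(t_k)\le x_k]=\int_{\cF}\max_{1\le j\le k}\frac{y(t_j)}{x_j}\,\nu(dy).
\]
All the analysis then takes place inside this representation.

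For $s<t$, \cite{DEM13} decomposes the conditional law of $\eta(t)$ given $(\eta(u))_{u\le s}$ into an atomic part, coming from the extremal function $U_iY_i$ already realising the supremum at some $u\le s$ and still active at time $t$, plus an absolutely continuous part produced by points contributing only strictly after $s$. Imposing the Markov property $\bbP[\eta(t)\in\cdot\mid(\eta(u))_{u\le s}]=\bbP[\eta(t)\in\cdot\mid\eta(s)]$ should force, almost surely, the value at $t$ of the extremal function carrying $\eta(s)$ to depend on the past only through $\eta(s)$; exploiting stationarity together with the $-1$-homogeneity of the representation $(u,y)\mapsto(\lambda u,y/\lambda)$, this deterministic propagation rule must reduce to multiplication by a constant $a\in[0,1]$, independent of $s$ and of the value of $\eta(s)$. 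Applying the same argument in the reversed time direction yields a second constant $\check a\in[0,1]$ describing how the extremal function carrying $\eta(s)$ propagates to the past.

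The heart of the proof is then to establish that $a\check a=0$: comparing in two different ways the probability that one and the same extremal function realises $\eta$ at three consecutive times should contradict the Markov factorisation of the three-point distribution unless at least one of $a$, $\check a$ vanishes. Once this is done, each extremal function is forced to have either the forward shape $t\mapsto c\,a^{t-n}\mathbf{1}_{\{t\ge n\}}$ (the max-AR(1) case) or the backward shape $t\mapsto c\,a^{n-t}\mathbf{1}_{\{t\le n\}}$ (its time reversal); a direct Markov check on an independent sum rules out having both families at the same time, so $\nu$ coincides with the spectral measure of \eqref{eq:maxAR} or of \eqref{eq:TRmaxAR}. The parameter $a$ is then read off from $\mathrm{supp}(\eta(t+1)/\eta(t))$ as in the discussion preceding the theorem. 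I expect the rigidity step excluding $a,\check a\in(0,1)$ to be the main obstacle: the conclusion is easy to guess, but carrying it out requires careful bookkeeping of the extremal and sub-extremal decomposition of \cite{DEM13} at three time points and a precise comparison of the atomic and absolutely continuous contributions to the three-point conditional law.
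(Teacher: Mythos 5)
Your overall strategy---use the de Haan/Poisson representation and the conditional distribution formulas of \cite{DEM13} to turn the Markov property into a rigidity statement about the spectral functions---is indeed the paper's strategy, but the rigidity statement you propose is wrong, and this is a genuine gap rather than a missing detail. You claim that the Markov property forces the extremal function carrying $\eta(s)$ to propagate \emph{deterministically} in both time directions, by constants $a,\check a\in[0,1]$, and you make ``$a\check a=0$'' the heart of the proof. But the max-AR(1) process $X_a$ with $a\in(0,1)$, which is Markov and must survive your argument, does not have this structure: its spectral functions are $f(t)=c\,a^{t-n}1_{\{t\geq n\}}$, so the forward step is deterministic multiplication by $a$ while the backward step is genuinely random---it equals $a^{-1}f(t)$ except at the (random) birth time $n$, where it is $0$. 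If both directions really were deterministic with constants and $a>0$, then $f(s)>0$ would give $f(s+1)=af(s)>0$ and $f(s)=\check a f(s+1)$, forcing $\check a=1/a>1$ and hence the two-sided geometric function $t\mapsto ua^t$, which is incompatible with $\bbE[Y(t)]\equiv 1$; your framework would therefore wrongly exclude every $X_a$ with $a\in(0,1)$. The correct statement, Proposition~\ref{theopre} of the paper, is a \emph{dichotomy}: for $t_1<t<t_2$, either $Y(t_1)=\alpha_{t,t_1}Y(t)$ a.s.\ on $\{Y(t)>0\}$ or $Y(t_2)=\alpha_{t,t_2}Y(t)$ a.s.\ on $\{Y(t)>0\}$, with $\alpha_{t,t'}$ the essential infimum of $Y(t')/Y(t)$. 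It is obtained by writing the conditional independence of past and future given $\eta(t)=z$ through the explicit conditional CDF of Proposition~\ref{prop:distcond}, using the identity for $(x\vee y-c)^+$ to isolate the only factor depending on $z$, and concluding both that $\min(Y(t_1)/c_1,Y(t_2)/c_2)\leq Y(t)$ a.s.\ and that the two events $\{Y(t_i)/c_i\leq Y(t)\}$ are independent under the tilted measure $\bbE[1_{\{\cdot\}}Y(t)]$; two independent events whose union has full measure force one of them to have full measure. You do not carry out any computation of this kind and you yourself flag the three-point comparison as an unresolved ``main obstacle'', so the decisive step is both misstated and unproved.

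Even granting the correct dichotomy, several steps needed to finish are absent from your plan. Knowing $Y(1)=aY(0)$ a.s.\ on $\{Y(0)>0\}$ and shift-invariance of the support only places $Y$ in $D_a\cup D_a'$, where $D_a'$ is the cone of two-sided functions $t\mapsto ua^t$; excluding $D_a'$ uses $\bbE[Y(t)]=1$ and Fatou's lemma. The degenerate cases $a=0$ (pairwise independence at all lags via Proposition~\ref{prop:indep} and the Markov property) and $a=1$ (constant paths) need separate arguments. Finally, ``$Y$ supported on $D_a$ implies $\eta\stackrel{d}= X_a$'' is not automatic: it relies on the fact that a $-1$-homogeneous, shift-invariant exponent measure supported on the orbit cone of a single function is determined by its normalization (Lemma~\ref{prop:crit}). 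Your closing remark about reading $a$ off the support of $\eta(t+1)/\eta(t)$ does not substitute for this uniqueness argument.
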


The structure of the paper is the following. In section 2, we gather some preliminaries on max-stable processes and their representations that will be useful in our approach. 
Section 3 is devoted to the proof of Theorem \ref{theo1}. An extension to continuous time processes is considered in section 4.

\section{Preliminaries on max-stable processes}

\subsection{Representations of max-stable processes}
Our approach relies on the following representation of simple max-stable process due to de Haan \cite{dH84}, see also Penrose \cite{Pen92} and Schlather \cite{Sc02}.
The symbol $\stackrel{d}=$ stands for equality in distribution.
\begin{theorem}
Let $\eta=(\eta(t))_{t\in\bbZ}$ be a simple max-stable process on $\bbZ$. Then, there exists a nonnegative random process $Y$ such that 
\begin{equation}\label{eq:Y}
\bbE[Y(t)]=1\quad \mbox{for all}\ t\in \bbZ,
\end{equation}
and
\begin{equation}\label{eq:deHaan}
\Big(\eta(t)\Big)_{t\in\bbZ}\stackrel{d}=\Big(\bigvee_{i\geq 1}U_iY_i(t)\Big)_{t\in\bbZ},
\end{equation}
where $(Y_i)_{i\geq 1}$ are i.i.d. copies of $Y$ and $\{U_i,i\geq 1\}$ is a Poisson point process on $(0,+\infty)$ with intensity $u^{-2}du$ and independent of $(Y_i)_{i\geq 1}$.
\end{theorem}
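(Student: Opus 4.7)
The plan is to derive the representation from the finite-dimensional distributions of $\eta$, following the approach of de Haan \cite{dH84} and Penrose \cite{Pen92}. I would proceed in three steps.

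Step 1 (Finite-dimensional exponent measures). For every finite $I=\{t_1,\dots,t_k\}\subset\bbZ$, the vector $(\eta(t_1),\dots,\eta(t_k))$ is simple max-stable in $\bbR^k$, so by the classical multivariate theory there is a unique $\sigma$-finite exponent measure $\mu_I$ on $E_I:=[0,\infty)^k\setminus\{0\}$, homogeneous of degree $-1$, such that
\[
-\log\bbP[\eta(t_j)\le y_j,\ j\le k]=\mu_I\bigl(\{x\in E_I:x_j>y_j\text{ for some }j\}\bigr),
\]
and the 1-Fr\'echet marginals translate into $\mu_I(\{x_j>y\})=y^{-1}$ for each coordinate.

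Step 2 (Extension to the infinite cone). The family $(\mu_I)$ is consistent under coordinate projections, after discarding the image of $0$, and each $\mu_I$ is finite on sets bounded away from the origin. A localized Kolmogorov-type extension, applied on the exhaustion by sets of the form $\{f:\max_{|n|\le N}f(n)\ge 1/N\}$ and glued monotonically as $N\to\infty$, produces a $\sigma$-finite measure $\mu$ on the cone $\cC:=[0,\infty)^\bbZ\setminus\{0\}$, projecting onto each $\mu_I$ and inheriting homogeneity $\mu(c\cdot A)=c^{-1}\mu(A)$.

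Step 3 (Polar decomposition and construction of $Y$). Fix any enumeration of $\bbZ$ and partition $\cC$ according to the first index at which $f$ is strictly positive; on each piece, normalize $f$ by the value at that index. This assembles a Borel functional $\rho:\cC\to(0,\infty)$ that is positive everywhere and homogeneous of degree $1$, and the map $f\mapsto(\rho(f),f/\rho(f))$ together with homogeneity of $\mu$ yields the factorization
\[
\mu=\int_0^\infty r^{-2}\,dr\cdot\bbP_Y\circ(r\cdot)^{-1},
\]
where $\bbP_Y$ is the law of some random process $Y$ on $\cC$. By the marking theorem for Poisson processes, $\{U_iY_i\}_{i\ge1}$ is precisely a PPP on $\cC$ with intensity $\mu$. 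A direct Laplace-type computation gives
\[
-\log\bbP\Bigl[\bigvee_iU_iY_i(t)\le y\Bigr]=\bbE[Y(t)]/y,
\]
and matching to $\bbP[\eta(t)\le y]=\exp(-1/y)$ forces $\bbE[Y(t)]=1$, while the higher-dimensional laws of $\bigvee_i U_iY_i$ coincide with those of $\eta$ by construction of $\mu$.

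The main technical obstacle is Step 2: the usual Kolmogorov extension theorem is stated for probability measures, so one must work within the ring generated by the sets $\{\max_{|n|\le N}f(n)\ge\varepsilon\}$ (on which $\mu_I$ is finite), apply Kolmogorov there, and patch via a monotone class argument using the $\sigma$-finiteness. A secondary subtlety appears in Step 3, since no single coordinate $f\mapsto f(t_0)$ is positive $\mu$-almost everywhere in general (for instance when $\eta$ has independent coordinates, $\mu$ charges the union of coordinate axes), which is why the countable-partition construction of $\rho$ is needed rather than a one-line polar change of variables.
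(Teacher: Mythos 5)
The paper does not actually prove this statement---it is quoted from de Haan \cite{dH84}, Penrose \cite{Pen92} and Schlather \cite{Sc02}---so I am judging your argument on its own terms. Steps 1 and 2 follow the standard route: finite-dimensional exponent measures, their projective consistency, and a localized Kolmogorov extension on the exhaustion $A_N=\{f:\max_{|n|\le N}f(n)\ge 1/N\}$, on which each finite-dimensional restriction has total mass at most $(2N+1)N$ and can be normalized. Modulo the details you defer, this part is sound.

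The genuine gap is in Step 3. Your normalizing functional $\rho$ (the value of $f$ at the first index of the enumeration where $f$ is positive) is indeed positive, measurable and $1$-homogeneous, but the induced spherical measure $\nu(\cdot)=\mu\big(\{f:\rho(f)\ge 1,\ f/\rho(f)\in\cdot\,\}\big)$ need not be finite, so it cannot be normalized into the law of a random process $Y$: the factorization you write requires $\bbP_Y$ to be a probability measure. This fails in exactly the independent-coordinates example you yourself invoke. There $\mu=\sum_j\mu_j$ with $\mu_j$ carried by the $n_j$-th coordinate axis, $\rho(u e_{n_j})=u$, and each axis contributes $\mu_j(\{\rho\ge 1\})=\int_1^\infty u^{-2}\,du=1$, so $\nu$ has infinite total mass. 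In other words, the countable partition fixes the measurability and positivity of $\rho$, not the integrability of $\nu$. The standard repair is to choose a homogeneous functional whose unit superlevel set has finite $\mu$-mass, for instance $\rho(f)=\sup_{n\in\bbZ}c_nf(n)$ with $c_n>0$ and $\sum_n c_n<\infty$; then
\[
\mu(\{\rho\ge 1\})\le\sum_{n\in\bbZ}\mu(\{f:\ f(n)\ge c_n^{-1}\})=\sum_{n\in\bbZ}c_n<\infty,
\]
and this quantity is positive, while $\rho$ is $\mu$-a.e.\ finite by homogeneity. Writing $\nu=\nu(S)\,\bbP_{\tilde Y}$ on $S=\{\rho=1\}$ and setting $Y=\nu(S)\tilde Y$ gives \eqref{eq:deHaan}, and the marginal identity $\int_0^\infty\bbP[rY(t)>y]\,r^{-2}dr=\mu(\{f(t)>y\})=y^{-1}$ then forces \eqref{eq:Y}, which is your concluding computation. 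With that substitution your proof goes through.
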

The random process $Y$ is called a spectral process associated to $\eta$. Conversely, we call $\eta$ the max-stable process associated to $Y$.

Consider the function space $\cF=[0,+\infty)^{\bbZ}$ endowed with the product sigma-algebra and $\cF_0=\cF\setminus\{0\}$. The exponent measure of $\eta$ is defined by  
\begin{equation}\label{eq:defmu}
\mu(A)=\int_0^\infty \bbP[uY\in A]u^{-2}dy,\quad A\in\cF_0\ \mbox{measurable}.
\end{equation}
It does not depend on the choice of the spectral process $Y$ but only on the distribution of $\eta$. It satisfies the 
homogeneity property
\[
\mu(uA)=u^{-1}\mu(A),\quad u>0, \ A\in\cF_0\ \mbox{measurable},
\]
and is related to $\eta$ by the relations
\begin{eqnarray*}
&&\bbP[\eta(t_1)\leq z_1,\ldots,\eta(t_k)\leq z_k ]\\
&=&\exp\Big(-\mu\{f\in\cF;\ f(t_i)>z_i\ \mbox{for some}\ 1\leq i\leq k \}\Big)
\end{eqnarray*}
for all $k\geq 1$, $t_1,\ldots,t_k\in\bbZ$, $z_1,\ldots,z_k\geq 0$.

Note that there is no uniqueness for the representation \eqref{eq:deHaan}. We introduce therefore the following notion of equivalent spectral processes.
\begin{definition}\label{def:equiv}
Let $Y$ and $Y'$ be nonnegative stochastic processes satisfying 
\begin{equation}\label{eq:defequiv}
\bbE[Y(t)]=\bbE[Y'(t)]=1,\quad t\in\bbZ.
\end{equation}
We say that $Y$ and $Y'$ are equivalent if and only if  the associated max-stable processes have the same distribution.
\end{definition}

The following property will be useful. A subset $C\subset \cF$ is called a cone if and only if $f\in C$ implies $uf\in C$ for all $u\geq 0$.
\begin{proposition}\label{prop:equiv}
Let $Y$ and  $Y'$ be equivalent processes as in Definition \ref{def:equiv}. Let  $C\subset\cF$ be a measurable cone such that $\bbP[Y\in C]=1$. Then, $\bbP[Y'\in C]=1$.
% for all  $t\in\bbZ^d$,
%\begin{equation}\label{eq:equiv}
%\bbE\big[Y(t)1_{\{Y\in C\}}\big]=\bbE\big[Y'(t)1_{\{Y'\in C\}}\big].
%\end{equation}
%Furthermore, 
\end{proposition}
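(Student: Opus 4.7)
The plan hinges on the uniqueness of the exponent measure $\mu$ of $\eta$. Since $Y$ and $Y'$ are equivalent, the associated max-stable processes have the same finite-dimensional distributions, and the displayed relation between $\bbP[\eta(t_i)\leq z_i]$ and $\mu$ forces the exponent measure computed from $Y$ to coincide with the one computed from $Y'$. Via definition~\eqref{eq:defmu}, this yields
$$
\int_0^\infty \bbP[uY\in A]\,u^{-2}\,du=\int_0^\infty \bbP[uY'\in A]\,u^{-2}\,du
$$
for every measurable $A\subset\cF_0$. My idea is to apply this identity to $A=\cF_0\setminus C$ and exploit the cone structure on both sides to read off the conclusion.

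I may assume $C$ is nonempty, since otherwise the hypothesis $\bbP[Y\in C]=1$ already fails; by the cone axiom, $0\in C$. For the $Y$-side, observe that for every $u>0$, multiplication by $u^{-1}\geq 0$ shows that $uY\in C$ iff $Y\in C$. Combined with $\bbP[Y\in C]=1$, this gives $\bbP[uY\in A]\leq\bbP[uY\notin C]=\bbP[Y\notin C]=0$ for every $u>0$, and integrating yields $\mu(A)=0$.

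For the $Y'$-side, the same cone argument delivers $\{uY'\notin C\}=\{Y'\notin C\}$ for every $u>0$. Since $0\in C$, the event $\{Y'\notin C\}$ is contained in $\{Y'\neq 0\}$, so
$$
\{uY'\in A\}=\{uY'\notin C\}\cap\{uY'\neq 0\}=\{Y'\notin C\}.
$$
Writing $p=\bbP[Y'\notin C]$, the right-hand integral reduces to $p\int_0^\infty u^{-2}du$, which diverges already near $0$. Since this quantity must equal $\mu(A)=0$, I conclude $p=0$, i.e.\ $\bbP[Y'\in C]=1$.

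The only point that needs a little care is the interplay between the cone (which contains the zero function) and the exponent measure (which ignores it), but once $C$ is assumed nonempty both cancellations are clean, and the proof is essentially bookkeeping around~\eqref{eq:defmu} together with the divergence of $\int_0^\infty u^{-2}du$.
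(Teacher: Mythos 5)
Your proof is correct and follows essentially the same route as the paper: both arguments pass to the exponent measure, use the fact that equivalent spectral processes yield the same exponent measure, and translate $\bbP[Y\in C]=1$ into $\mu(\cF_0\setminus C)=0$ (and back for $Y'$). The paper declares these translations ``clear,'' whereas you carry out the bookkeeping explicitly via the cone identity $\{uY\in C\}=\{Y\in C\}$ and the divergence of $\int_0^\infty u^{-2}\,du$, which is a valid and complete justification of the same steps.
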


\begin{proof} 
Let $\mu$ (resp. $\mu'$) be the exponent measure of the max-stable process associated to $Y$ (resp. $Y'$) by Equation \eqref{eq:defmu}. 
Clearly,  $Y$ and $Y'$ are equivalent if and only the exponent measures $\mu$ and $\mu'$ are equal.
On the other hand,  Equation \eqref{eq:defmu} implies clearly that $\bbP[Y\in C]=1$ if and only if  $\mu$ is supported by $C$, i.e. $\mu[\cF\setminus C]=0$. Similarly $\bbP[Y'\in C]=1$ if and only if $\mu'[\cF\setminus C]=0$. 

Using this, we deduce easily that if $Y$ and $Y'$ are equivalent processes with $\bbP[Y\in C]=1$, then $\mu=\mu'$ is supported by $C$ and $\bbP[Y'\in C]=1$.
%
%\[
%\mu (A)=\int_{0}^{\infty} \bbP [uY\in A]u^{-2}du =\int_{0}^{\infty} \bbP[uY'\in A]u^{-2}du=\mu'(A)
%\]
%for all measurable set $A\subset\mathcal F_0$. Setting $A=\{ f\in\cF; f(t)> 1, f\in C\}$ with $C\subset \mathcal F$ a measurable cone, we obtain
%\[
%\mu(A)=\bbE\big[Y(t)1_{\{Y\in C\}}\big]= \bbE\big[Y'(t)1_{\{Y'\in C\}}\big]=\mu'(A)
%\]
%and Equation~\eqref{eq:equiv} follows. 
%Note in particular that, setting $C=\cF$ in \eqref{eq:equiv}, we get $\bbE[Y(t)]=\bbE[Y'(t)]$ for all $t\in\bbZ$.
%Assuming that $\bbP[Y\in C]=1$, we get for all $t\in\bbZ$,
%\[
%\bbE[Y(t)]=\bbE\big[Y(t)1_{\{Y\in C\}}\big]=\bbE\big[Y'(t)1_{\{Y'\in C\}}\big]=\bbE[Y'(t)],
%\]
%whence $\bbE\big[Y'(t)1_{\{Y'\notin C\}}\big]=0$.  
%We deduce that $Y'(t)=0$ almost surely on the event $\{Y'\notin C\}$. Since this holds for all $t\in\bbZ$, $Y'= 0$ almost surely on $\{Y'\notin C\}$. Hence
%$\bbP[Y'\notin C]=\bbP[Y'\notin C, Y'= 0]$ and this probability is equal to $0$ because $0\in C$. This proves $\bbP[Y'\in C]=1$.
\end{proof}

\subsection{Brown-Resnick stationary processes}
In the following we focus on stationary max-stable processes. A random process $X=(X(t))_{t\in\bbZ}$ is called stationary 
if $X$ and $X(\cdot+s)$ have the same distribution for all $s\in\bbZ$. We use the following terminology, due to Kabluchko {\it et al.} \cite{KSdH09}.
\begin{definition}\label{def:BR}
A nonnegative random process $Y$ satisfying \eqref{eq:Y} is called Brown-Resnick stationary if the associated max-stable process $\eta$ defined by \eqref{eq:deHaan} is stationary.
\end{definition}
It follows from the definition that $Y$ is Brown-Resnick stationary if and only if $Y$ and $Y(\cdot+s)$ are equivalent 
(in the sense of Definition \ref{def:equiv}) for all $s\in\bbZ$.  Proposition \ref{prop:equiv} implies then the following result. 

\begin{proposition}\label{prop:BRcone}
Let $Y$ be a Brown-Resnick stationary process and let   $C\subset\cF$ be a measurable cone such that $\bbP[Y\in C]=1$.
Then, for all $s\in\bbZ$, $\bbP[Y(\cdot+s)\in C]=1$. Furthermore, noting $\theta_s:\cF\to\cF$ the shift operator defined by $\theta_s(f)=f(\cdot+s)$, it holds
\[
\bbP\Big[Y\in\bigcap_{s\in\bbZ}\theta_s(C) \Big]=1.
\]
\end{proposition}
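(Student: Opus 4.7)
The first assertion is essentially a direct restatement of Proposition~\ref{prop:equiv}. Since $Y$ is Brown-Resnick stationary, $Y$ and $Y' := Y(\cdot+s)$ are equivalent in the sense of Definition~\ref{def:equiv}. Both have unit mean (the mean of $Y(\cdot+s)(t)=Y(t+s)$ is $\bbE[Y(t+s)]=1$), so Proposition~\ref{prop:equiv} applies: from $\bbP[Y\in C]=1$ we get $\bbP[Y(\cdot+s)\in C]=1$ for every $s\in\bbZ$.

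For the second assertion, I would translate this event about the shifted process into an event about $Y$ itself. The shift $\theta_s:\cF\to\cF$ is a measurable bijection with inverse $\theta_{-s}$, so $\{Y(\cdot+s)\in C\}=\{\theta_s(Y)\in C\}=\{Y\in\theta_s^{-1}(C)\}=\{Y\in\theta_{-s}(C)\}$. Thus $\bbP[Y\in\theta_{-s}(C)]=1$ for each $s\in\bbZ$, and replacing $s$ by $-s$ gives $\bbP[Y\in\theta_s(C)]=1$ for every $s\in\bbZ$. Since $\bbZ$ is countable, a union-bound on the complements yields
\[
\bbP\Big[Y\in\bigcap_{s\in\bbZ}\theta_s(C)\Big]=1.
\]

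Along the way it is worth noting that each $\theta_s(C)$ is again a measurable cone: measurability comes from $\theta_s$ being a measurable bijection of $\cF$, and the cone property follows from the linearity $\theta_s(uf)=u\theta_s(f)$ for $u\geq 0$. This is not logically required for the proof (the countable-intersection step only uses a union of null sets), but it confirms that the hypothesis of Proposition~\ref{prop:equiv} could in principle be re-applied to $\theta_s(C)$ if needed.

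I do not expect any real obstacle here: the statement is a bookkeeping consequence of Proposition~\ref{prop:equiv} plus the identification $\theta_s^{-1}=\theta_{-s}$ and the countability of $\bbZ$. The only point that demands a moment of care is distinguishing $\theta_s(C)$ from $\theta_s^{-1}(C)$, which is harmless because the shift is invertible and the final conclusion is quantified over all $s\in\bbZ$.
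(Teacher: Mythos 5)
Your proof is correct and follows essentially the same route as the paper: apply Proposition~\ref{prop:equiv} with $Y'=Y(\cdot+s)$, then intersect over the countably many shifts. The only difference is that you spell out the bookkeeping $\{Y(\cdot+s)\in C\}=\{Y\in\theta_{-s}(C)\}$ and the quantifier swap $s\mapsto -s$, which the paper leaves implicit.
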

\begin{proof}
As we noticed, if $Y$ is Brown-Resnick stationary, then  $Y$ and $Y(\cdot+s)$ are equivalent for all $s\in\bbZ$. 
The result follows then directly from Proposition~\ref{prop:equiv} by setting $Y'=Y(\cdot+s)$. For the last statement, if $\bbP[Y\in C]=1$, then 
$\bbP[Y\in \theta_{s}(C)]=1$ for all $s\in\bbZ$, whence we deduce $\bbP\big[Y\in\cap_{s\in\bbZ}\theta_s(C) \big]=1$.
\end{proof}

The following lemma will also be useful in order to prove equivalence of processes. For $f_0\in\cF$, we note $C_{inv}(f_0)=\{uf_0(\cdot+s);\ u\geq 0, s\in\bbZ\}$ the smallest shift-invariant cone containing $f_0$. 
\begin{lemma}\label{prop:crit}
Let $Y$ and $Y'$ be  Brown-Resnick stationary processes satisfying \eqref{eq:defequiv} and such that
\[
\bbP[Y\in C_{inv}(f_0)]=\bbP[Y'\in C_{inv}(f_0)]=1\quad \mbox{for some}\ f_0\in\cF.
\]
Then $Y$ and $Y'$ are equivalent.
\end{lemma}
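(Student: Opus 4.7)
The plan is to show that the exponent measure $\mu$ associated to $Y$ via \eqref{eq:defmu} depends only on $f_0$ and not on the particular choice of $Y$ among Brown-Resnick stationary processes with mean one supported on $C_{inv}(f_0)$. Since, as noted in the proof of Proposition \ref{prop:equiv}, equivalence of spectral processes is the same as equality of the associated exponent measures, this will force $\mu=\mu'$ and hence $Y\equiv Y'$ in the sense of Definition \ref{def:equiv}.

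Using $\bbP[Y \in C_{inv}(f_0)]=1$ together with a measurable selection, I would first write $Y=U\,f_0(\cdot+S)$ almost surely for random variables $U\geq 0$ and $S\in\bbZ$. Substituting into \eqref{eq:defmu} and performing the change of variables $w=uU$ yields the explicit formula
\[
\mu(A)=\sum_{s\in\bbZ} q_s \int_0^\infty \mathbf{1}_A\!\bigl(w\,f_0(\cdot+s)\bigr)\,w^{-2}\,dw, \qquad q_s:=\bbE[U\,\mathbf 1\{S=s\}],
\]
and the analogous formula with coefficients $q'_s$ for $\mu'$. In the generic case where the map $(u,s)\mapsto u f_0(\cdot+s)$ is injective, the measures $\nu_s(A):=\int w^{-2}\mathbf 1_A(wf_0(\cdot+s))\,dw$ are mutually singular; then the Brown-Resnick stationarity of $Y$ (i.e.\ shift-invariance of $\mu$) forces $q_s\equiv c$ to be constant, and the normalization $\mu\{f:f(t)>y\}=1/y$ pins down the value $c\sum_r f_0(r)=1$. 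The same constant arises for $\mu'$, yielding $\mu=\mu'$.

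The main technical obstacle is the possibility that $f_0$ admits nontrivial stabilizers, i.e.\ $f_0(\cdot+r)=\lambda f_0$ for some $(r,\lambda)\neq(1,0)$, in which case the parametrization above fails to be injective and the individual coefficients $q_s$ are no longer uniquely defined. This can be circumvented by observing that the group $(0,\infty)\times\bbZ$ acts transitively on $C_{inv}(f_0)\setminus\{0\}$ by scaling and shift, so there is at most one $(-1)$-homogeneous shift-invariant Radon measure supported on this cone, up to a multiplicative constant; the normalization $\mu\{f:f(t)>y\}=1/y$ then fixes the constant and gives $\mu=\mu'$ in this case as well.
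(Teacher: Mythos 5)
Your proof is correct and follows essentially the same route as the paper: both arguments reduce the lemma to the uniqueness of a measure on $C_{inv}(f_0)$ that is $-1$-homogeneous, shift-invariant, supported by the cone and normalized by $\mu\{f:\,f(0)\geq 1\}=1$, and both establish that uniqueness by decomposing the cone into the rays $C(f_0(\cdot+s))$, using homogeneity to reduce each ray's contribution to a single coefficient, shift-invariance to equate the coefficients, and the normalization to fix their common value $(\sum_r f_0(r))^{-1}$. The only divergence is the degenerate sub-case of a nontrivial stabilizer $f_0(\cdot+r)=\lambda f_0$: the paper handles it elementarily by passing to a minimal period $s_0$ and a finite fundamental domain $\{0,\dots,s_0-1\}$ of pairwise disjoint rays, whereas you appeal to uniqueness (up to scalar) of relatively invariant measures on a homogeneous space of $(0,\infty)\times\bbZ$ --- a legitimate but less self-contained step, and note that when $\lambda\neq 1$ no nonzero such measure exists at all, which is harmless only because $\mu$ is assumed to satisfy the normalization, so that sub-case is vacuous.
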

This lemma can be related to the notion of stationary indecomposable max-stable process (see Wang {\it et al.} \cite{WSR12}). The fact that the cone $C_{inv}(f_0)$ 
cannot be written as a disjoint union of shift-invariant smaller cones implies that the associated stationary max-stable process is indecomposable.
\begin{proof}
We denote by $\mu$ and $\mu'$ the exponent measure of the max-stable processes associated to $Y$ and $Y'$ respectively.
Clearly the measure $\mu$ (and also $\mu'$) satisfies the following four properties:
\begin{itemize}
\item[i)] $\mu$ is $-1$-homogeneous;
\item[ii)] $\mu(\{f\in\cF;\ f(0)\geq 1\})=1$;
\item[iii)] $\mu$ is shift-invariant;
\item[iv)] $\mu$ is supported by $C_{inv}(f_0)$.
\end{itemize}
Recall indeed that properties i) and ii) are satisfied for all exponent measures, that iii) holds if and only if the spectral process $Y$ is Brown-Resnick stationary and  that iv) is equivalent to  $\bbP[Y\in C_{inv}(f_0)]=1$. 

We will prove below that there exists at most one measure $\mu$ on $\cF_0$ satisfying the four properties i)-iv). Since $\mu'$ satisfies the same properties, we deduce that $\mu=\mu'$, whence $Y$ and $Y'$ are equivalent.

\vspace{0.5cm}

For $g\in\cF$, we denote by $C(g)=\{ug;\ u\geq 0\}$ the smallest cone containing $g$. Clearly,
$C_{inv}(f_0)=\cup_{s\in\bbZ} C(f_s)$ with $f_s=\theta_s f_0$. The different cones in the union may have non trivial intersections and two cases occur.
\begin{itemize}
\item First case: there is $s_0\geq 1$ such that $C(f_{s_0})\cap C (f_0)\neq \{0\}$. \\
Without loss of generality, we can suppose that $s_0$ is minimal with this property. Then, 
\[
C_{inv}(f)=\cup_{0\leq s\leq s_0-1} C(f_s) 
\]
and
\[
C(f_s)\cap C(f_{s'})=\{0\},\quad 0\leq s\neq s'\leq s_0-1.
\]

\item Second case: for all $s\geq 1$, $C(f_s)\cap C (f)= \{0\}$.\\
Then 
\[
C_{inv}(f)=\cup_{ s\in\bbZ} C(f_s) \quad \mbox{and}\quad C(f_s)\cap C(f_{s'})=\{0\},\quad  s\neq s'.
\]
\end{itemize}
We give the proof in the first case only, the second case follows from straightforward modifications. The support property iv) implies that $\mu=\sum_{s=0}^{s_0-1}\mu_s$ where $\mu_s$ is the restriction of $\mu$ to $C(f_s)$. The homogeneity property i) entails that the restriction $\mu_s$ is completely determined by the real parameter $\alpha_s=\mu(\{uf_s; u\geq 1\})$. It holds indeed
\[
\mu_s(\{uf_s; u\geq v\})=v^{-1}\mu_s(\{uf_s; u\geq 1\})=\alpha_sv^{-1},\quad v>0.
\]
Furthermore, the shift invariance property iii) implies that $\alpha_s\equiv \alpha$ does not depend on $s$. Finally, the real parameter $\alpha$ is determined by the normalization property ii): we have indeed
\[
\mu(\{f\in\cF; f(0)\geq 1\})=\sum_{s=0}^{s_0-1}\mu_s(\{f\in\cF; f(0)\geq 1\})=\alpha\sum_{s=0}^{s_0-1}f_0(s) 
\] 
whence property ii) yields $\alpha=(\sum_{s=0}^{s_0-1}f_0(s))^{-1}$. This proves that $\mu$ is completely determined by properties i)-iv) and completes the proof of the lemma. 
\end{proof}

\subsection{Conditional distributions}
Our study of the Markov property for max-stable process relies on explicit formulas for the regular conditional distributions of max-stable process
established in Dombry and Eyi-Minko \cite{DEM13}. The following expression for the conditional distribution function will be useful (see Proposition~4.1 in \cite{DEM13}).
\begin{proposition}\label{prop:distcond}
Let $\eta$ be a simple max-stable process with representation \eqref{eq:deHaan}. For every $t,t_1,\ldots,t_k\in \bbZ$ and $z,z_1,\ldots,z_k>0$ 
\begin{eqnarray*}
&&\bbP[\eta(t_1) \leq z_1,\ldots,\eta(t_k) \leq z_k \mid \eta(t)=z]\\
&=&\bbE\big[1_{\{\vee_{i=1}^k \frac{Y(t_i)}{z_i} \leq  \frac{Y(t)}{z}\}}Y(t) \big] 
 \exp\Big(-\bbE\Big[\Big(\bigvee_{i=1}^k \frac{Y(t_i)}{z_i}-\frac{Y(t)}{z} \Big)^+ \Big]\Big),
\end{eqnarray*}
with $(x)^+=\max(x,0)$.
\end{proposition}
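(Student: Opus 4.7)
The plan is to derive the conditional distribution by first writing the joint distribution of $(\eta(t_1),\ldots,\eta(t_k),\eta(t))$ in closed form via the exponent measure, differentiating in the conditioning variable, and dividing by the marginal Fréchet density at $t$.

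First, recall that from the representation \eqref{eq:deHaan} and the definition \eqref{eq:defmu} of the exponent measure, the joint distribution function is
\[
\bbP[\eta(t_1)\leq z_1,\ldots,\eta(t_k)\leq z_k,\eta(t)\leq z]=\exp\bigl(-V(z_1,\ldots,z_k,z)\bigr),
\]
where, introducing the shorthands $M:=\bigvee_{i=1}^k Y(t_i)/z_i$ and $W_z:=Y(t)/z$,
\[
V(z_1,\ldots,z_k,z)=\bbE[M\vee W_z].
\]
Using the identity $M\vee W_z=M+(W_z-M)^+$ (and noting that $\bbE[M]$ is independent of $z$), I would differentiate under the expectation, getting
\[
\frac{\partial V}{\partial z}=\bbE\Bigl[\frac{\partial}{\partial z}(W_z-M)^+\Bigr]=-\frac{1}{z^2}\bbE\bigl[Y(t)\,1_{\{M\leq W_z\}}\bigr].
\]
The justification of the interchange — which is the only real technical point — follows from dominated convergence, using $|\partial_z(W_z-M)^+|\le Y(t)/z^2$ and the integrability assumption $\bbE[Y(t)]=1$ from \eqref{eq:Y}; this is the main obstacle, though a mild one.

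Next I would differentiate the joint distribution function with respect to $z$, obtaining
\[
\frac{\partial}{\partial z}\bbP[\eta(t_1)\le z_1,\ldots,\eta(t_k)\le z_k,\eta(t)\le z]
=\frac{1}{z^2}\bbE\bigl[Y(t)\,1_{\{M\le W_z\}}\bigr]\,e^{-V(z_1,\ldots,z_k,z)}.
\]
Since $\eta(t)$ is $1$-Fréchet with density $z^{-2}e^{-1/z}$, dividing gives the regular conditional distribution
\[
\bbP[\eta(t_1)\le z_1,\ldots,\eta(t_k)\le z_k\mid\eta(t)=z]=\bbE\bigl[Y(t)\,1_{\{M\le W_z\}}\bigr]\,e^{-V(z_1,\ldots,z_k,z)+1/z}.
\]

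Finally, I would simplify the exponent using $M\vee W_z=W_z+(M-W_z)^+$ together with $\bbE[W_z]=\bbE[Y(t)]/z=1/z$, which yields
\[
V(z_1,\ldots,z_k,z)-\frac{1}{z}=\bbE\bigl[(M-W_z)^+\bigr].
\]
Substituting into the previous display produces exactly the claimed formula, completing the proof. The scheme is clean because the max-stability and the Fréchet marginal normalization do all the work; the only genuine care is the differentiation-under-expectation step, which is standard given \eqref{eq:Y}.
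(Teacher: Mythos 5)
Your derivation is correct, but note that the paper does not actually prove this proposition: it is quoted as Proposition~4.1 of Dombry and Eyi-Minko \cite{DEM13}, where the formula is obtained from a finer analysis of the underlying Poisson point process (conditioning on which atoms of $\{U_iY_i\}$ realize the maximum at the site $t$). Your route is different and more elementary: write the joint c.d.f.\ as $\exp(-\bbE[M\vee W_z])$ with $M=\vee_{i=1}^k Y(t_i)/z_i$ and $W_z=Y(t)/z$, differentiate in $z$, divide by the $1$-Fr\'echet density, and simplify using $\bbE[W_z]=1/z$; all the algebra checks out and reproduces the stated formula. Two small points deserve a sentence in a careful writeup. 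First, the pointwise derivative of $z\mapsto(W_z-M)^+$ fails to exist on the event $\{W_z=M,\ Y(t)>0\}$; since $z\mapsto\bbE[(W_z-M)^+]$ is convex this matters for at most countably many $z$, and your formula with the weak inequality $1_{\{M\le W_z\}}$ is the left-derivative version --- harmless, because a regular conditional distribution is only determined for Lebesgue-a.e.\ $z$, but worth acknowledging. Second, identifying the ratio of derivatives as a version of the conditional probability requires knowing that $z\mapsto\bbP[\eta(t_1)\le z_1,\ldots,\eta(t_k)\le z_k,\eta(t)\le z]$ is absolutely continuous so that it is the integral of its a.e.\ derivative; this again follows from convexity (hence local Lipschitz continuity) of $V$ in $z$, and should be stated rather than assumed. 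Neither point is a gap. The trade-off between the two approaches is that your c.d.f.-differentiation argument is short and self-contained precisely because there is a single conditioning point, whereas the point-process machinery of \cite{DEM13} extends to conditioning on several values, at the cost of more setup.
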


The following well known result on independence for max-stable process will also be useful ({\it cf.} de Haan \cite{dH84}).
\begin{proposition}\label{prop:indep}
Let $\eta$ be a simple max-stable process with representation \eqref{eq:deHaan} and consider $t_1,t_2\in\bbZ$.
Then $\eta(t_1)$ and $\eta(t_2)$ are independent if and only if  
\[
\bbP[Y(t_1)=0 \ \mbox{or}\ Y(t_2)=0]=1.
\]
\end{proposition}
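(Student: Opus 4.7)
The plan is to compute the joint distribution of $(\eta(t_1),\eta(t_2))$ explicitly via the exponent measure (equivalently, via the spectral representation), and then compare it with the product of the marginal $1$-Fr\'echet distributions. Independence will be translated into a simple integrability condition on $Y$.

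Starting from the representation \eqref{eq:deHaan} and the standard computation for the cumulative distribution function of a max of a Poisson sum, I would write
\[
\bbP[\eta(t_1)\leq z_1,\eta(t_2)\leq z_2]=\exp\Big(-\bbE\Big[\max\!\big(Y(t_1)/z_1,\,Y(t_2)/z_2\big)\Big]\Big),\quad z_1,z_2>0,
\]
which is a direct consequence of \eqref{eq:defmu} and the product-space formula for the exponent measure recalled in the paper. Using the identity $\max(a,b)=a+b-\min(a,b)$ together with the normalization $\bbE[Y(t_i)]=1$ from \eqref{eq:Y}, I would rewrite the exponent as
\[
\tfrac{1}{z_1}+\tfrac{1}{z_2}-\bbE\!\left[\min\!\big(Y(t_1)/z_1,\,Y(t_2)/z_2\big)\right].
\]

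Now independence of $\eta(t_1)$ and $\eta(t_2)$, given their $1$-Fr\'echet marginals, is exactly the assertion that the joint CDF equals $\exp(-1/z_1-1/z_2)$ for all $z_1,z_2>0$. Comparing with the formula above, independence is equivalent to
\[
\bbE\!\left[\min\!\big(Y(t_1)/z_1,\,Y(t_2)/z_2\big)\right]=0\quad \mbox{for all}\ z_1,z_2>0.
\]
Since the integrand is nonnegative, this is equivalent to $\min(Y(t_1)/z_1,Y(t_2)/z_2)=0$ almost surely for all $z_1,z_2>0$, which in turn (taking $z_1=z_2=1$, and conversely by positive homogeneity of $\min$) is equivalent to $\min(Y(t_1),Y(t_2))=0$ a.s., i.e.\ $\bbP[Y(t_1)=0\ \mbox{or}\ Y(t_2)=0]=1$.

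There is really no hard step here: the proof is essentially bookkeeping with the exponent measure. The only point requiring a little care is justifying the bivariate CDF formula from the Poisson representation (exchanging expectation and the Poisson intensity integral), but this is standard and can be quoted from de~Haan \cite{dH84} or deduced directly from \eqref{eq:defmu} by writing $\{f:f(t_1)>z_1\ \mbox{or}\ f(t_2)>z_2\}$ and integrating out the radial coordinate $u$ in the pushforward $u^{-2}du\otimes\bbP_Y$.
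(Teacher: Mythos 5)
Your proof is correct. Note that the paper does not actually prove Proposition~\ref{prop:indep}; it is stated as well known and attributed to de~Haan \cite{dH84}. Your argument is the standard one that this citation points to: the identity $\bbP[\eta(t_1)\leq z_1,\eta(t_2)\leq z_2]=\exp(-\bbE[\max(Y(t_1)/z_1,Y(t_2)/z_2)])$ follows from \eqref{eq:defmu} and Tonelli (integrating out $u$ against $u^{-2}du$), and the reduction of independence to $\bbE[\min(Y(t_1),Y(t_2))]=0$ via $\max(a,b)=a+b-\min(a,b)$ and the normalization \eqref{eq:Y} is exactly the usual route; the final step using nonnegativity of the minimum and homogeneity is also fine. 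The only point worth making explicit is that checking the factorization of the joint CDF for $z_1,z_2>0$ suffices for independence, which holds because the marginals are supported on $(0,\infty)$.
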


\section{Proof of Theorem~\ref{theo1}}

\subsection{A property of max-stable Markov processes}

The following result is the central tool in our proof of Theorem~\ref{theo1}. Note that no stationarity assumption is required at this stage.

\begin{proposition}\label{theopre}
Let $\eta=(\eta(t))_{t\in\bbZ}$ be a simple max-stable process with representation \eqref{eq:deHaan}. 
For $t,t'\in\bbZ$, we denote by $\alpha_{t,t'}$  the essential infimum of the random variable $Y(t')/Y(t)$ conditionally on $Y(t)>0$, i.e.
\begin{equation}\label{eq:defalpha}
\alpha_{t,t'}=\inf\{c>0;\ \bbP[Y(t')/Y(t)\leq c \mid Y(t)>0]>0\}.
\end{equation}
If $\eta$ satisfies the Markov property, then, for all  $t_1<t<t_2$,
\[
\bbP[Y(t_1)=\alpha_{t,t_1}Y(t) \mid Y(t)>0]=1 \quad \mbox{or} \quad  \bbP[Y(t_2)=\alpha_{t,t_2}Y(t)\mid Y(t)>0]=1. 
\] 
\end{proposition}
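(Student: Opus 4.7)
The plan is to translate Markov conditional independence into an identity between the explicit formulas of Proposition~\ref{prop:distcond}, and then extract structure by matching $z$-dependences on both sides.

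First, I would write the Markov property as the factorization
$$\bbP[\eta(t_1)\leq z_1,\eta(t_2)\leq z_2\mid\eta(t)=z] = \bbP[\eta(t_1)\leq z_1\mid\eta(t)=z]\,\bbP[\eta(t_2)\leq z_2\mid\eta(t)=z],$$
substitute Proposition~\ref{prop:distcond} on each side, and set $u_i=z_i/z$. After this substitution the identity becomes
$$A(u_1,u_2)\exp(-\Phi(u_1,u_2)/z) = A_1(u_1)A_2(u_2)\exp(-(\Phi_1(u_1)+\Phi_2(u_2))/z),$$
where $A_i(u_i)=\bbE[\mathbf{1}_{Y(t_i)\leq u_iY(t)}Y(t)]$, $A$ is the joint version, $\Phi_i(u_i)=\bbE[(Y(t_i)/u_i-Y(t))^+]$ and $\Phi(u_1,u_2)=\bbE[((Y(t_1)/u_1)\vee(Y(t_2)/u_2)-Y(t))^+]$; crucially, none of these depends on $z$. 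The elementary identity $(a\vee b-c)^++(a\wedge b-c)^+=(a-c)^++(b-c)^+$ rewrites the exponent gap as $\Phi_1+\Phi_2-\Phi=J_0(u_1,u_2):=\bbE[((Y(t_1)/u_1)\wedge(Y(t_2)/u_2)-Y(t))^+]\geq 0$.

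For any $u_1>\alpha_{t,t_1}$ and $u_2>\alpha_{t,t_2}$ one has $A_1,A_2>0$, and the Markov identity becomes $A/(A_1A_2)=\exp(-J_0/z)$ with a $z$-free left-hand side; this can hold for every $z>0$ only if $J_0(u_1,u_2)=0$ and $A(u_1,u_2)=A_1(u_1)A_2(u_2)$. The vanishing of $J_0$ translates to the a.s.\ statement that $Y(t_1)\leq u_1Y(t)$ or $Y(t_2)\leq u_2Y(t)$. Intersecting the countably many null exceptional sets along rationals $u_1\downarrow\alpha_{t,t_1}$ and $u_2\downarrow\alpha_{t,t_2}$ and reasoning pointwise with the essinf definition (if some $\omega$ satisfies $Y(t)(\omega)>0$ and $Y(t_1)(\omega)/Y(t)(\omega)>\alpha_{t,t_1}$, pick a rational $u_1$ in between and conclude $Y(t_2)(\omega)\leq u_2Y(t)(\omega)$ for all rational $u_2>\alpha_{t,t_2}$, hence $Y(t_2)(\omega)=\alpha_{t,t_2}Y(t)(\omega)$), one obtains that a.s.\ on $\{Y(t)>0\}$, either $Y(t_1)=\alpha_{t,t_1}Y(t)$ or $Y(t_2)=\alpha_{t,t_2}Y(t)$.

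To close, I would take the same limit in $A=A_1A_2$: right-continuity of the relevant cumulative functions under the $Y(t)$-biased probability $\hat\bbP(\cdot)=\bbE[\mathbf{1}_\cdot Y(t)]$ yields $p_{12}=p_1p_2$, where $p_i=\hat\bbP[Y(t_i)=\alpha_{t,t_i}Y(t)]$ and $p_{12}$ is the joint probability. Inclusion--exclusion applied to the coverage event of the previous paragraph gives $1=p_1+p_2-p_1p_2=1-(1-p_1)(1-p_2)$, so $p_1=1$ or $p_2=1$; since $\hat\bbP(A)=1$ is equivalent to $\bbP(A\mid Y(t)>0)=1$, this is the stated dichotomy. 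The main obstacle I anticipate is bookkeeping at the degenerate boundaries: when some $\alpha_{t,t_i}=0$ the quantity $1/\alpha_{t,t_i}-1$ blows up and one cannot plug $u_i=\alpha_{t,t_i}$ directly into Proposition~\ref{prop:distcond}, so the entire argument must be organized as a limit from the interior $u_i\downarrow\alpha_{t,t_i}$; and when $p_{12}=0$ the Markov identity forces $p_1p_2=0$ directly, which together with coverage still delivers the required conclusion.
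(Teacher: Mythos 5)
Your proposal is correct and follows essentially the same route as the paper: factorize via the Markov property, substitute the conditional-distribution formula of Proposition~\ref{prop:distcond}, scale $z_i=u_iz$ so that the $z$-dependence forces both the vanishing of $\bbE[((Y(t_1)/u_1)\wedge(Y(t_2)/u_2)-Y(t))^+]$ and the product identity under the $Y(t)$-tilted measure $\hat\bbP_t$, then combine coverage with independence to get the dichotomy. The only (harmless) difference is in the endgame: the paper keeps $c_1>\alpha_{t,t_1}$, $c_2>\alpha_{t,t_2}$ fixed and uses monotonicity of the two events in $c_i$ to deduce the ``or'' at the essential infima, whereas you pass to the limit $u_i\downarrow\alpha_{t,t_i}$ along rationals first and finish by inclusion--exclusion with $p_{12}=p_1p_2$.
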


\begin{proof}[Proof of Proposition~\ref{theopre}]
First note that the definition \eqref{eq:defalpha} entails 
\[
\bbP[Y(t')\geq \alpha_{t,t'}Y(t)\mid Y(t)>0]=1.
\]
Hence, in order to prove Proposition~\ref{theopre}, it  is enough to prove
\[
\bbP[Y(t_1)\leq \alpha_{t,t_1}Y(t)\mid Y(t)>0]=1 \quad \mbox{or} \quad \bbP[Y(t_2)\leq \alpha_{t,t_2}Y(t)\mid Y(t)>0]=1, 
\]
or equivalently that, for all $c_1>\alpha_{t,t_1}$ and all $c_2>\alpha_{t,t_2}$,
\begin{equation}\label{eq:theopre1}
\bbP[Y(t_1)\leq c_1 Y(t) \mid Y(t)>0]=1 \ \ \mbox{or} \  \ \bbP[Y(t_2)\leq c_2 Y(t) \mid Y(t)>0]=1. 
\end{equation}

We now prove Equation \eqref{eq:theopre1}. We use the fact that the past and the future of a Markov chain are independent conditionally on the present.
More formally, for all  $t_1<t<t_2$ and  $z,z_1,z_2>0$,
\begin{eqnarray*}
&&\bbP[\eta (t_1)\leq z_1, \eta(t_2)\leq z_2 \mid \eta(t)=z]\\
&=&\bbP[\eta (t_1)\leq z_1 \mid \eta(t)=z]\,\bbP[\eta(t_2)\leq z_2 \mid \eta(t)=z].
\end{eqnarray*}
Using the explicit expression for the conditional cumulative distribution function given in Proposition \ref{prop:distcond}, this is equivalent to
\begin{eqnarray*}
&&\bbE\big[1_{\{\frac{Y(t_1)}{z_1} \vee \frac{Y(t_2)}{z_2} \leq  \frac{Y(t)}{z}\}}Y(t)\big]  
 \exp\Big(-\bbE\Big[ \Big(  \frac{Y(t_1)}{z_1} \vee \frac{Y(t_2)}{z_2}-\frac{Y(t)}{z} \Big)^+ \Big]\Big)\\
&=& \bbE\big[1_{\{\frac{Y(t_1)}{z_1} \leq  \frac{Y(t)}{z}\}}Y(t)\big]\exp\Big(-\bbE\Big[ \Big(  \frac{Y(t_1)}{z_1} -\frac{Y(t)}{z} \Big)^+ \Big]\Big) \\
&&\quad \times \ 
\bbE\big[1_{\{\frac{Y(t_2)}{z_2} \leq  \frac{Y(t)}{z}\}}Y(t)\big]\exp\Big(-\bbE\Big[ \Big(  \frac{Y(t_2)}{z_2}-\frac{Y(t)}{z} \Big)^+ \Big]\Big).
\end{eqnarray*}
Using the identity $(a\vee b-c)^+-(a-c)^+-(b-c)^+=(a\wedge b-c)^+$, this last equation simplifies into
\begin{eqnarray*}
&&\bbE\big[1_{\{\frac{Y(t_1)}{z_1} \vee \frac{Y(t_2)}{z_2} \leq  \frac{Y(t)}{z}\}}Y(t)\big]  
 \exp\Big(\bbE\Big[ \Big(  \frac{Y(t_1)}{z_1} \wedge\frac{Y(t_2)}{z_2}-\frac{Y(t)}{z} \Big)^+ \Big]\Big)\\
&=& \bbE\big[1_{\{\frac{Y(t_1)}{z_1} \leq  \frac{Y(t)}{z}\}}Y(t)\big]\bbE\big[1_{\{\frac{Y(t_2)}{z_2} \leq  \frac{Y(t)}{z}\}}Y(t)\big].
\end{eqnarray*}
Finally, setting $z_1=c_1z$ and $z_2=c_2z$ with $c_1,c_2,z>0$, we obtain
\begin{eqnarray*}
&&\bbE\big[1_{\{\frac{Y(t_1)}{c_1} \vee \frac{Y(t_2)}{c_2} \leq  Y(t)\}}Y(t)\big]  
 \exp\Big(\bbE\Big[ \frac{1}{z}\Big(  \frac{Y(t_1)}{c_1} \wedge\frac{Y(t_2)}{c_2}-Y(t) \Big)^+ \Big]\Big)\\
&=& \bbE\big[1_{\{\frac{Y(t_1)}{c_1} \leq  Y(t)\}}Y(t)\big]\bbE\big[1_{\{\frac{Y(t_2)}{c_2} \leq  Y(t)\}}Y(t)\big].
\end{eqnarray*}
Note that the right hand side of this equality does not depend on $z>0$ and is positive as soon as  $c_1>\alpha_{t,t_1}$ and  $c_2>\alpha_{t,t_2}$ 
(this is a simple consequence of the definition \eqref{eq:defalpha}). Then, the exponential factor in the left hand side must be constant and  equal to $1$.
We deduce that, for all $c_1>\alpha_{t,t_1}, c_2>\alpha_{t,t_2}$,
\begin{equation}\label{eq:theopre2}
 \bbE\big[1_{\{\frac{Y(t_1)}{c_1} \vee \frac{Y(t_2)}{c_2} \leq  Y(t)\}}Y(t)\big]= 
\bbE\big[1_{\{\frac{Y(t_1)}{c_1} \leq  Y(t)\}}Y(t)\big]\bbE\big[1_{\{\frac{Y(t_2)}{c_2} \leq  Y(t)\}}Y(t)\big]
\end{equation}
and also
\begin{eqnarray}
\bbP\Big[\frac{Y(t_1)}{c_1} \wedge\frac{Y(t_2)}{c_2}\leq Y(t) \Big]=1\label{eq:theopre3}.
\end{eqnarray}
Let us introduce the  probability   measure  $\hat\bbP_{t}(\cdot)=\bbE[1_{\{\cdot\}}Y(t)]$ and the events 
\[
A_1=\Big\{\frac{Y(t_1)}{c_1} \leq  Y(t) \Big\}\quad \mbox{and}\quad A_2=\Big\{\frac{Y(t_2)}{c_2} \leq  Y(t) \Big\}.
\]
With these notations, Equation \eqref{eq:theopre2} becomes 
\[
\hat\bbP_t[A_1\cap A_2]=\hat\bbP_t[A_1]\hat\bbP_t[A_2] 
\]
and states that the events $A_1$ and $A_2$ are $\hat\bbP_t$-independent. On the other hand, Equation \eqref{eq:theopre3} yields
$\bbP[A_1\cup A_2]=1$ which clearly implies $\hat\bbP_t[A_1\cup A_2]=1$. 
Taking the complementary set, we obtain $\hat\bbP_t[A_1^c\cap A_2^c]=0$ and, from the independence of $A_1$ and $A_2$, 
$\hat\bbP_t[A_1^c]\hat\bbP_t[ A_2^c]=0$. Thus, we  have $\hat\bbP_t[A_1^c]=0$ or $\hat\bbP_t[A_1^c]=0$. 
Finally, the probability measures $\hat\bbP_t[\,\cdot\,]$ and $\bbP[\,\cdot\,\mid Y(t)>0]$ 
are equivalent in the sense that they have the same null sets. Hence, it holds 
\[
\bbP[A_1^c\mid Y(t)>0]=0\quad\mbox{or}\quad \bbP[A_2^c\mid Y(t)>0]=0.
\]
This is equivalent to Equation \eqref{eq:theopre1} and this concludes the proof of Proposition~\ref{theopre}.
\end{proof}

\subsection{A characterization of max-AR(1) processes}
We provide a simple characterization of max-autoregressive processes that will be useful for the proof of Theorem~\ref{theo1}. 
We consider the cone of constant functions  
\[
D_1=\{f\in\cF;\ \forall t\in\bbZ,\ f(t)=f(0)\},
\]
the cone of Dirac functions 
\[
D_0=\{f\in\cF;\ \exists t_0\in\bbZ,\ \forall t\in\bbZ,\ f(t)=f(t_0)1_{\{t=t_0\}}\},
\]
and also, for $a\in (0,1)$, the cone 
\[
D_a=\{f\in\cF;\ \exists t_0\in\bbZ,\ \forall t\in\bbZ,\ f(t)=f(t_0)a^{t-t_0}1_{\{t\geq t_0\}}\}.
\]

\begin{proposition}\label{prop:maxAR}
Let $\eta$ be a simple max-stable process with representation \eqref{eq:deHaan} and assume that $\eta$ is stationary.  Then, the following statements are equivalent:
\begin{itemize}
\item[i)] $\eta$ has the same distribution as the max-AR(1) process $X_a$ defined by \eqref{eq:maxAR},
\item[ii)] $\bbP[Y\in D_a]=1$.
\end{itemize} 
\end{proposition}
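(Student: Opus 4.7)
The plan is to reduce both implications to the equivalence-type statements of Section~2 by constructing one explicit spectral representation $Y^*$ of the max-AR(1) process $X_a$ that is concentrated on $D_a$. The key observation is that $D_a$ coincides with the shift-invariant cone $C_{inv}(f_0)$ generated by $f_0(t) = 1_{\{t=0\}}$ when $a=0$, by $f_0(t) = a^t 1_{\{t \geq 0\}}$ when $a \in (0,1)$, and by $f_0 \equiv 1$ when $a=1$; a direct inspection in each case shows that an arbitrary element of $D_a$ is of the form $u \theta_s f_0$ and conversely.

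To construct $Y^*$, the case $a = 1$ is trivial: the constant process $Y^* \equiv 1$ has mean $1$, lies in $D_1$, and its associated max-stable process $\bigvee_i U_i$ (constant in $t$) has the same law as $X_1$. For $a \in [0,1)$ I would fix any fully supported probability measure $\nu$ on $\bbZ$, draw $T \sim \nu$, and put
\[
Y^*(t) = \frac{(1-a) a^{t-T} 1_{\{t \geq T\}}}{\nu(\{T\})}, \quad t \in \bbZ.
\]
Then $Y^* \in D_a$ almost surely (with $t_0 = T$) and $\bbE[Y^*(t)] = \sum_{n \in \bbZ} (1-a) a^{t-n} 1_{\{t \geq n\}} = 1$ by the geometric sum. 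To see that $Y^*$ generates $X_a$, I would compute the exponent measure of $X_a$ directly from \eqref{eq:maxAR}: writing each $F_n$ as the maximum of an independent Poisson point process on $(0,\infty)$ with intensity $u^{-2}du$ gives
\[
\mu_{X_a} = \sum_{n \in \bbZ} \int_0^\infty \delta_{u g_n}\, u^{-2}\,du, \qquad g_n(t) = (1-a) a^{t-n} 1_{\{t \geq n\}},
\]
and the change of variables $u \mapsto u/\nu(\{n\})$ combined with conditioning on $T$ identifies this with $\int_0^\infty \bbP[uY^* \in \cdot]\, u^{-2}du$, which is precisely the exponent measure of the max-stable process associated to $Y^*$.

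With $Y^*$ in hand both implications follow at once. For (i) $\Rightarrow$ (ii), the spectral processes $Y$ and $Y^*$ are equivalent in the sense of Definition~\ref{def:equiv} (both generate $X_a$), $D_a$ is a measurable cone, and $\bbP[Y^* \in D_a] = 1$, so Proposition~\ref{prop:equiv} yields $\bbP[Y \in D_a] = 1$. For (ii) $\Rightarrow$ (i), the hypothesis that $\eta$ is stationary makes $Y$ Brown-Resnick stationary; $Y^*$ is also Brown-Resnick stationary since $X_a$ is stationary; and both are supported on $C_{inv}(f_0) = D_a$, so Lemma~\ref{prop:crit} gives equivalence of $Y$ and $Y^*$, hence $\eta \stackrel{d}= X_a$. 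The only non-routine step is the bookkeeping with the auxiliary weights $\nu(\{T\})^{-1}$ in the definition of $Y^*$: one must check that they cancel correctly in the change of variables so that the exponent measures match. Once that is done, the remainder is a direct application of the machinery set up in Section~2.
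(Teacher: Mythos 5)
Your proof is correct and follows essentially the same route as the paper: both directions reduce to showing that the exponent measure of $X_a$ is concentrated on the cone $D_a=C_{inv}(f_a)$, after which (i)$\Rightarrow$(ii) is Proposition~\ref{prop:equiv} and (ii)$\Rightarrow$(i) is Lemma~\ref{prop:crit}. The only difference is cosmetic: you construct an explicit spectral process $Y^*$ by randomizing the shift with weights $\nu$ (and your bookkeeping with the factors $\nu(\{T\})^{-1}$ does cancel correctly), whereas the paper simply writes down the exponent measure $\mu'$ of $X_a$ from its moving-maximum form, observes it is supported by $D_a$, and lets de Haan's representation theorem supply a spectral process $Y'$ with $\bbP[Y'\in D_a]=1$.
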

 
\begin{proof}
We denote by $\mu'$  the exponent measure of $X_a$. 
For $a\in [0,1)$, Equation \eqref{eq:maxAR} implies that $X_a=\vee_{n\in\bbZ} F_nf_a(\cdot-n)$ with $f_a(t)=(1-a)a^t1_{\{t\geq 0\}}$,
whence we deduce that  $\mu'$ is given by
\[
\mu'[A]=\sum_{n\in\bbZ}\int_0^\infty 1_{\{uf_a(\cdot-n)\in A\}} u^{-2}du,\quad A\subset \cF_0\ \mbox{measurable}.
\]
For $a=1$, it holds $X_a=F_0f_1$ with $f_1(t)\equiv 1$, so that 
\[
\mu'[A]=\int_0^\infty 1_{\{uf_1\in A\}} u^{-2}du,\quad A\subset \cF_0\ \mbox{measurable}.
\]
In both cases, $\mu'$ is clearly supported by the cone of functions $D_a$. This implies that if $Y'$ is a spectral process associated to $X_a$, then $\bbP[Y'\in D_a]=1$.

We now prove the implication $i)\Rightarrow ii)$. If $\eta$ has the same distribution as the max-AR(1) process $X_a$, then the spectral processes $Y$ and $Y'$ are equivalent 
and Proposition~\ref{prop:BRcone} implies $\bbP[Y\in D_a]=1$.

We finally prove the converse implication $ii)\Rightarrow i)$. We assume that $\bbP[Y\in D_a]=1$ and we apply Lemma~\ref{prop:crit}.
Note that $D_a$ is equal to the smallest shift invariant cone containing $f_a$ and denoted by $C_{inv}(f_a)$. The spectral processes $Y$ and $Y'$ are Brown-Resnick stationary processes such that $\bbP[Y\in C_{inv}(f_a)]=\bbP[Y'\in C_{inv}(f_a)]=1$. Lemma~\ref{prop:crit} entails that $Y$ and $Y'$ are equivalent, which means that $\eta$ and $X_a$ have the same distribution.
\end{proof}

\subsection{Proof of Theorem~\ref{theo1}}
Let $\eta$ be a simple max-stable process with representation \eqref{eq:deHaan}. We assume that $\eta$ is stationary and  satisfies the Markov property. 
According to Proposition~\ref{theopre} with $t=0$, $t_1=-1$ and $t_2=1$, it holds
\[
\bbP[Y(-1)=\alpha_{0,-1}Y(0)\mid Y(0)>0]=1\ \mbox{or}\ \bbP[Y(1)=\alpha_{0,1}Y(0)\mid Y(0)>0]=1.
\]
Two cases naturally appear:
\begin{itemize}
\item Case 1:  $\bbP[Y(1)=\alpha_{0,1}Y(0)\mid Y(0)>0]=1$.\\
We will prove below that, in this case, $\eta$ is a max-AR(1) process \eqref{eq:maxAR} with parameter $a=\alpha_{0,1}$. 
To this aim, we use the characterization of max-AR(1) processes given by Proposition~\ref{prop:maxAR} so that it is enough to prove $\bbP[Y\in D_a]=1$.
Note that $a\in[0,1]$ since $a=\alpha_{0,1}=\bbE[Y(1)1_{\{Y(0)>0\}}]$.
\item Case 2: $\bbP[Y(-1)=\alpha_{0,-1}Y(0)\mid Y(0)>0]=1$.\\
We prove that, in this case, $\eta$ is a time reversed max-AR(1) process \eqref{eq:TRmaxAR} with parameter $a=\alpha_{0,-1}$. 
This is easily deduced from case 1 since the time reversed process 
$\check\eta=(\eta(-t))_{t\in\bbZ}$ is a stationary simple max-stable process satisfying the Markov property. 
The associated spectral process $\check Y=(Y(-t))_{t\in\bbZ}$ satisfies $\bbP[\check Y(1)=\alpha_{0,-1}\check Y(0)\mid \check Y(0)>0]=1$, so that $\check\eta$ is a max-AR(1) process with parameter $a=\alpha_{0,-1}$.
\end{itemize}
Thanks to the discussion above, the proof of Theorem~\ref{theo1} is reduced to the proof of the following statement: 
\begin{equation}\label{eq:theo1.1}
\mbox{If}\ \bbP[Y(1)=aY(0)\mid Y(0)>0]=1, \ \mbox{then}\ \bbP[Y\in D_a]=1.
\end{equation} 
We consider three different cases: $a\in (0,1)$, $a=0$ and $a=1$.

\vspace{0.5cm}
\noindent
{\bf Proof of \eqref{eq:theo1.1} in the case $a\in (0,1)$:}\\
We define the cone $C\subset \cF$ by
\[
C=\{f\in\cF;\ f(0)>0\Rightarrow f(1)=af(0)\}.
\]
The property $\bbP[Y(1)=aY(0)\mid Y(0)>0]=1$ implies  $\bbP[Y\in C]=1$. Since $\eta$ is stationary, $Y$ is Brown-Resnick stationary and Proposition \ref{prop:BRcone} implies
\begin{equation}\label{eq:theo1.2}
\bbP\Big[ Y\in\bigcap_{s\in \bbZ} \theta_s(C) \Big]=1.
\end{equation}
Clearly, $\bigcap_{s\in \bbZ} \theta_s(C)$ is equal to the set of functions
\[
\big\{f\in\cF;\ \forall s\in\bbZ,  f(s)>0 \Rightarrow f(s+1)=af(s)\big\}.
\]
For such a function $f$, we easily prove  by induction that  $f(t_0)>0$ implies $f(t)=f(t_0)a^{t-t_0}$ for all $t>t_0$. 
Then, if $t_0=\min\{t\in\bbZ; f(t)>0\}>-\infty$,  $f(t)=f(t_0)a^{t-t_0}1_{\{t\geq t_0\}}$ for all $t\in\bbZ$, and $f\in D_a$. Otherwise, if $t_0=\min\{t\in\bbZ; f(t)>0\}=-\infty$, 
$f(t)=f(0)a^t$ for all $t\in\bbZ$ and $f$ belongs to the cone $D_a'$ generated by the power function $t\mapsto a^t$. 
This proves 
\[
\bigcap_{s\in \bbZ} \theta_s(C)=D_a\cup D_a'.
\]
So Equation \eqref{eq:theo1.2} is equivalent to $\bbP[Y\in D_a\cup D_a']=1$.
In order to prove $\bbP[Y\in D_a]=1$, it remains to prove that 
$\bbP[Y\in D_a'\setminus D_a]=0$.
Note that all function $f\in D'_a\setminus D_a$ is of the form $f(t)=u a^t$, $u>0$ and satisfies $\lim_{t\to -\infty} f(t)=+\infty$. 
Hence,
\[
\bbP[Y\in D_a'\setminus D_a]\leq \bbP[\lim_{t\to-\infty} Y(t)=+\infty].
\]
Equation \eqref{eq:Y} together with Fatou's lemma yields
\[
 \bbE[\liminf_{t\to -\infty}Y(t)]\leq \liminf_{t\to -\infty}\bbE[Y(t)]=1.
\]
We deduce that $\liminf_{t\to -\infty}Y(t)$ is almost surely finite so that 
\[
\bbP[\lim_{t\to-\infty} Y(t)=+\infty]=0. 
\]
Hence $\bbP[Y\in D_a'\setminus D_a]=0$ and $\bbP[Y\in D_a]=1$, which proves Equation \eqref{eq:theo1.1}.

\vspace{0.5cm}
\noindent
{\bf Proof of \eqref{eq:theo1.1} in the case $a=1$:}\\
First we prove that 
\[
\bbP[Y(1)=Y(0)\mid Y(0)>0]=1\quad\mbox{implies}\quad  \bbP[Y(-1)=Y(0)\mid Y(0)>0]=1.
\]
To see this, we note that $\bbP[Y(1)=Y(0)\mid Y(0)>0]=1$ if and only if $\bbP[Y\in C]=1$ with $C=\{f\in\cF; f(0)>0\Rightarrow f(1)=f(0)\}$. 
Since $Y$ is Brown-Resnick stationary, Proposition~\ref{prop:BRcone}  implies $\bbP[Y(\cdot-1)\in C]=1$, which yields $ \bbP[Y(0)=Y(-1)\mid Y(-1)>0]=1$.
Then, Equation \eqref{eq:Y} entails
\[
\bbE\big[Y(-1)1_{\{Y(-1)=Y(0)\}}\big]=\bbE\big[Y(0)1_{\{Y(-1)=Y(0)\}}\big]=1.
\]
We deduce $\bbE\big[Y(0)1_{\{Y(-1)\neq Y(0)\}}\big]=0$ which implies 
\[
\bbP[Y(-1)=Y(0)\mid Y(0)>0]=1. 
\]

Consider the cone 
\[
C=\{f\in\cF;\ f(0)>0\Rightarrow f(1)=f(-1)=f(0)\}.
\]
The conditions
\[
\bbP[Y(1)=Y(0)\mid Y(0)>0]=\bbP[Y(-1)=Y(0)\mid Y(0)>0]=1
\]
implies $\bbP[Y\in C]=1$, whence Proposition \ref{prop:BRcone} yields
\[
\bbP\Big[ Y\in\bigcap_{s\in \bbZ} \theta_s(C) \Big]=1.
\]
Clearly, $\bigcap_{s\in \bbZ} \theta_s(C)$ is equal to the cone of functions
\[
\big\{f\in\cF;\ \forall s\in\bbZ,  f(s)>0 \Rightarrow f(s+1)=f(s-1)=f(s)\big\}.
\]
One can easily prove by induction that this is the cone $D_1$ of constant functions. This proves Equation \eqref{eq:theo1.1}.

\vspace{0.5cm}
\noindent
{\bf Proof of \eqref{eq:theo1.1} in the case $a=0$:}\\
According to Proposition~\ref{prop:indep}, $\bbP[Y(1)=0\mid Y(0)>0]=1$ if and only if $\eta(0)$ and $\eta(1)$ are independent. 
Let $t\geq 2$. By the Markov property, $\eta(0)$ and $\eta(t)$ are independent conditionally on $\eta(1)$. 
But since $\eta(0)$ and $\eta(1)$ are independent, this implies that $\eta(0)$ and $\eta(t)$ are independent. 
Hence $\eta(0)$ and $\eta(t)$ are independent for all $t\geq 1$ and by the stationarity of $\eta$,  
$\eta(t)$ and $\eta(t')$ are  independent for all $t\neq t'$. 
Using Proposition~\ref{prop:indep} again, we deduce
\[
\bbP[Y(t')=0\mid Y(t)>0]=1\quad \mbox{for all}\  t\neq t',
\]
and also
\[
\bbP\big[\forall t'\neq t,\  Y(t')=0  \mid Y(t)>0]=1.
\]
This implies that  the set where $Y$ is non zero has almost surely at most one point. Equivalently,  
$\bbP[Y\in D_0]=1$ and Equation \eqref{eq:theo1.1} is proved.

\section{Continuous time setting}
We consider in this section an extension of Theorem~\ref{theo1} to the continuous time framework.

For $a\in (0,1)$, we denote by $g_a(t)=-\log(a)a^t1_{\{t\geq 0\}}$ the power function. The constant $-\log(a)$ ensures the normalization $\int_\bbR g_a(t)dt=1$.
We consider the moving maximum process
\begin{equation}\label{eq:maxARcont}
 Z_a(t)=\bigvee_{i\geq 1} U_i g_a(t-T_i),\quad t\in\bbR,
\end{equation}
where  $\{(U_i,T_i);\ i\geq 1\}$ is a Poisson point process on $(0,+\infty)\times\bbR$ 
with intensity $u^{-2}dudt$. The time reversed process $\check Z_a$ is defined similarly by
\begin{equation}\label{eq:TRmaxARcont}
 \check Z_a(t)=\bigvee_{i\geq 1} U_i \check g_a(t-T_i),\quad t\in\bbR,
\end{equation}
with $\check g_a(t)=-\log(a)a^{-t} 1_{\{t< 0\}}=g_a(-t^-)$. We use here a slightly different notion
of time reversal so that the function $\check g_a$ is c\`ad-l\`ag.\\
For $a=1$, we define $Z_1=\check Z_1$ a process with constant path and such $Z_1(0)$ has a standard $1$-Fr\'echet distribution.
\begin{lemma}\label{lem:theo2prep}
The processes $Z_a$ and $\check Z_a$ are  stationary simple max-stable processes satisfying the Markov property and with  c\`ad-l\`ag sample paths.
\end{lemma}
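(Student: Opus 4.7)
The plan is to establish the four properties—$1$-Fr\'echet margins with max-stability, stationarity, the Markov property, and c\`ad-l\`ag paths—directly from the Poisson moving-maximum representation. I will focus on $Z_a$ with $a\in(0,1)$: the case $a=1$ is trivial, and $\check Z_a$ is handled by the same arguments with $\check g_a$ in place of $g_a$.

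The distributional properties follow from the standard moving-maximum computation. The intensity $u^{-2}du\,dt$ is invariant under the shifts $t\mapsto t+s$, so $Z_a$ is stationary. The normalization $\int_{\bbR}g_a(t)\,dt=1$ gives
\[
\bbP[Z_a(t)\leq z]=\exp\Big(-\int_{\bbR}\!\int_0^\infty\! 1_{\{ug_a(t-s)>z\}}u^{-2}du\,ds\Big)=\exp(-1/z),
\]
hence $1$-Fr\'echet margins; the max-stability relation $n^{-1}\bigvee_{i=1}^n Z_a^{(i)}\stackrel{d}{=}Z_a$ is the usual Poisson superposition-plus-rescaling argument applied to $u\mapsto u/n$.

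The Markov property is my main conceptual step, and the plan is to exploit the multiplicative identity $g_a(t_2-T_i)=a^{t_2-t_1}g_a(t_1-T_i)$, valid whenever $T_i\leq t_1\leq t_2$. Splitting the Poisson sum at $t_1$ yields
\[
Z_a(t_2)=a^{t_2-t_1}Z_a(t_1)\vee M_{t_1,t_2},\qquad M_{t_1,t_2}:=\bigvee_{t_1<T_i\leq t_2}U_ig_a(t_2-T_i),
\]
and iteration expresses the whole future $(Z_a(u))_{u\geq t_1}$ as a measurable function of $Z_a(t_1)$ together with the Poisson restriction $\cP_{>t_1}:=\{(U_i,T_i):T_i>t_1\}$. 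Since $g_a$ vanishes on $(-\infty,0)$, the past $(Z_a(u))_{u\leq t_1}$ is a functional of $\cP_{\leq t_1}:=\{(U_i,T_i):T_i\leq t_1\}$ alone. As $\cP_{\leq t_1}$ and $\cP_{>t_1}$ are independent by the Poisson property, conditioning on $Z_a(t_1)$ decouples past and future, which is the Markov property. The mirrored identity $\check Z_a(t_1)=a^{t_2-t_1}\check Z_a(t_2)\vee\check M_{t_1,t_2}$ with $\check M_{t_1,t_2}$ built from points in $(t_1,t_2]$ gives the same conclusion for $\check Z_a$, the roles of past and future being exchanged.

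The main technical obstacle is the c\`ad-l\`ag regularity; for this I plan a truncation argument on each bounded interval $[-M,M]$. Split $Z_a=Z_a^{(\epsilon)}\vee Z_a^{\epsilon}$ according to whether $U_i>\epsilon$ or $U_i\leq\epsilon$; the bound $0\leq g_a\leq|\log a|$ gives $Z_a^{\epsilon}\leq\epsilon|\log a|$ uniformly in $t$. Inside $Z_a^{(\epsilon)}$ I further isolate the points with $T_i\geq -K$: they form an a.s.\ finite Poisson sample, and the associated maximum of c\`ad-l\`ag kernels $t\mapsto U_ig_a(t-T_i)$ is c\`ad-l\`ag on $[-M,M]$, with only upward jumps at the finitely many times $T_i$. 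Setting $V_i:=U_ia^{-T_i}$, the residual contribution from $T_i<-K$ is bounded on $[-M,M]$ by $|\log a|a^{-M}\sup_{T_i<-K}V_i$; the change of variable $v=ua^{-t}$ transforms the intensity into $a^{-t}v^{-2}dv\,dt$, giving expected number $\delta^{-1}a^K/|\log a|$ of image points with $t<-K$ and $v>\delta$, and a Borel--Cantelli argument therefore yields $\sup_{T_i<-K}V_i\to0$ a.s.\ as $K\to\infty$. Hence $Z_a$ is an a.s.\ uniform limit on $[-M,M]$ of c\`ad-l\`ag functions and is itself c\`ad-l\`ag; since $M$ is arbitrary, $Z_a$ has c\`ad-l\`ag sample paths on $\bbR$. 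The same estimates, with $\check g_a$ in place of $g_a$, dispose of $\check Z_a$.
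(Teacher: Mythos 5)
Your proof is correct, and for stationarity, the Fr\'echet margins, max-stability and the Markov property it follows essentially the same route as the paper: your identity $Z_a(t_2)=a^{t_2-t_1}Z_a(t_1)\vee M_{t_1,t_2}$, combined with the independence of the Poisson restrictions to $\{T_i\leq t_1\}$ and $\{T_i>t_1\}$, is exactly the argument of Equations \eqref{eq:ARcont}--\eqref{eq:ARcont2}, and your treatment of $\check Z_a$ by exchanging past and future matches the paper's. The one genuine divergence is the c\`ad-l\`ag step. The paper observes that $\int_{\bbR}\sup_{|z|\leq M}g_a(z-t)\,dt<\infty$, so that for each $\varepsilon>0$ the number of atoms with $\sup_{|z|\leq M}U_ig_a(z-T_i)>\varepsilon$ is Poisson with finite mean; hence $Z_a\vee\varepsilon$ is a finite maximum of c\`ad-l\`ag kernels on $[-M,M]$ and $Z_a$ is the uniform limit of these as $\varepsilon\to 0$. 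You instead truncate twice, in $U_i$ (small marks contribute at most $\epsilon|\log a|$ uniformly) and in $T_i$ (the far past contributes at most $|\log a|\,a^{-M}\sup_{T_i<-K}U_ia^{-T_i}$, controlled via the transformed intensity $a^{-t}v^{-2}\,dv\,dt$ and a Borel--Cantelli estimate), arriving at the same uniform approximation by finite maxima of c\`ad-l\`ag kernels. Both arguments are sound; the paper's single counting bound is shorter, while yours makes the geometric decay of the contribution of old atoms explicit. One small imprecision to fix: the atoms with $U_i>\epsilon$ and $T_i\geq -K$ do \emph{not} form an a.s.\ finite set, since the intensity of $\{u>\epsilon,\ t\geq -K\}$ is infinite; what you need, and what your argument implicitly uses, is that only atoms with $T_i\in[-K,M]$ can contribute on $[-M,M]$ because $g_a$ vanishes on $(-\infty,0)$, and that restricted set is a.s.\ finite with expected cardinality $(M+K)\epsilon^{-1}$.
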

\begin{proof}[Proof of Lemma \ref{lem:theo2prep}]
The result is straightforward when $a=1$. For $a\in (0,1)$, the process $Z_a$ is a moving maximum process with shape function $g_a$ 
satisfying $\int_\bbR g_a(t)dt=1$ and is hence a stationary simple max-stable process.\\
Straightforward computations yield that for any $t\in\bbR$
\begin{equation}\label{eq:ARcont}
 Z_a(t+s)=a^s Z_a(t) \bigvee F_a(t,s),\quad s\geq 0,
\end{equation}
with 
\begin{equation}\label{eq:ARcont2}
F_a(t,s)= \bigvee_{i\geq 1} U_i g_a(t+s-T_i)1_{\{T_i>t\}}.
\end{equation}
Note that for $t' \leq t$, $Z_a(t')$ depends only on the points $(U_i,T_i)$ such that $T_i\leq t$ while $F_a(t,s)$ depends only on 
the points $(U_i,T_i)$ such that $t+ s \geq T_i>t$. This implies that  $ (Z_a(t'))_{t'\leq t}$ and $(F_a(t,s))_{s\geq 0}$ are independent 
processes. This together with Equation \eqref{eq:ARcont} implies that the process $Z_a$ satisfies the Markov property. \\
We now prove that $Z_a$ has c\`ad-l\`ag  sample paths. Note that the shape function $g_a$ is c\`ad-l\`ag and satisfies   $\int_{\bbR}\sup_{|z|\leq M}g_{a}(z-t)dt  <\infty$ for all $M>0$.
This implies that the number of points $(U_i,T_i)$ such that $\sup_{|z|\leq M} U_i g_a(z-T_i)>\varepsilon$ is finite since it has a Poisson distribution with mean
\[
\int_0^\infty\int_{\bbR}1_{\{\sup_{|z|\leq M} u g_a(z-t)>\varepsilon\}}u^{-2}dudt=\varepsilon^{-1}\int_{\bbR}\sup_{|z|\leq M}g_{a}(z-t)dt  <\infty.
\]
We deduce that only finitely many functions $U_i g_a(\cdot-T_i)$ contribute to the exceedances of $Z_a(\cdot)=\bigvee_{i\geq 1} U_i g_a(\cdot-T_i)$ above $\varepsilon$ on $[-M,M]$. The function $Z_a\vee\varepsilon$ is thus c\`ad-l\`ag as a maximum of finitely many c\`ad-l\`ag functions. Finally, as $\varepsilon\to 0$, $Z_a\vee\varepsilon$ converges uniformly  to $Z_a$ and $Z_a$ is hence c\`ad-l\`ag as a uniform limit of c\`ad-l\`ag functions.\\
The similar statements for the time reversed process $\check Z_a$ are proved in the same way and we omit the details.
\end{proof}

Theorem \ref{theo1} extends to continuous time processes as follows.
\begin{theorem}\label{theo2}
Any stationary simple max-stable process $\eta=(\eta(t))_{t\in\bbR}$ with c\'ad-l\`ad sample paths and satisfying the Markov property is equal in distribution to $Z_a$ or $\check Z_a$ for some $a\in (0,1]$.
\end{theorem}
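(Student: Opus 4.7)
The plan is to reduce the continuous-time problem to Theorem~\ref{theo1} by sampling $\eta$ on finer and finer arithmetic grids, to deal with degenerate parameter values using the c\`ad-l\`ag hypothesis, and finally to glue the discrete pictures into a single continuous shape function. For every $h>0$, the process $\eta_h=(\eta(nh))_{n\in\bbZ}$ is a stationary simple max-stable Markov chain (the discrete Markov property follows from the continuous one by the tower property), so Theorem~\ref{theo1} supplies a parameter $a(h)\in[0,1]$ and, when $a(h)\in(0,1)$, an unambiguous direction $\sigma(h)\in\{+,-\}$. Subsampling a forward (resp.\ backward) max-AR$(1)$ at step $k$ produces a forward (resp.\ backward) max-AR$(1)$ with parameter raised to the $k$-th power, yielding the identities $a(kh)=a(h)^k$ and $\sigma(kh)=\sigma(h)$. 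Combining coarsening and refining, I obtain $a(qh)=a(h)^q$ and $\sigma(qh)=\sigma(h)$ for every positive rational $q$.

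Next I would rule out the endpoints of the range of $a(h)$. If $a(h)=0$ for some $h$, iterating gives $a(h/n)=0$ for all $n\geq 1$, so $\eta(0)$ and $\eta(h/n)$ are independent for every $n$; this is incompatible with right-continuity, since $\eta(h/n)\to\eta(0)$ in distribution would force the joint law to sit on the diagonal and yet equal the product of two $1$-Fr\'echet laws. If $a(h_0)=1$ for some $h_0$, iterating yields $a(h_0/n)=1$, so $\eta$ is constant on the dense set $\bigcup_n (h_0/n)\bbZ$ and therefore constant on $\bbR$ by right-continuity, giving $\eta\stackrel{d}{=}Z_1=\check Z_1$. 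I may therefore assume $a(h)\in(0,1)$ for all $h>0$. Setting $a:=a(1)\in(0,1)$, the identities above yield $a(q)=a^q$ and a constant direction $\sigma(q)=\sigma(1)$ for every $q\in\bbQ_{>0}$; up to replacing $\eta$ by the c\`ad-l\`ag time-reversal that relates $Z_a$ and $\check Z_a$ in Lemma~\ref{lem:theo2prep}, I may assume $\sigma(1)=+$.

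Third, I would reconstruct the spectral process. By Proposition~\ref{prop:maxAR}, for every $q\in\bbQ_{>0}$ the restriction $Y|_{q\bbZ}$ of a spectral process $Y$ of $\eta$ lies almost surely in the forward max-AR$(1)$ cone at scale $q$ with parameter $a^q$, so there exist random $T_q\in q\bbZ$ and $c_q>0$ with $Y(t)=c_q a^{t-T_q}\mathbf{1}_{\{t\geq T_q\}}$ for $t\in q\bbZ$. Consistency under grid refinement along $q_n=1/n!$ makes $T_{q_n}$ non-increasing in $n$; its limit $T$ is finite almost surely, for otherwise each $Y(t)$ on $\bbQ$ would have to equal $C\,a^t$ for a single random $C>0$, contradicting $\bbE[Y(t)]=1$ after letting $t\to-\infty$ exactly as in the Fatou argument used in the proof of Theorem~\ref{theo1}. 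This produces a common random $c>0$ with $Y(t)=c\,a^{t-T}\mathbf{1}_{\{t\geq T\}}$ for every $t\in\bbQ$; choosing a c\`ad-l\`ag version of $Y$ (available because $\eta$ is c\`ad-l\`ag and the de~Haan representation can be realized with c\`ad-l\`ag spectral functions) extends the formula to all $t\in\bbR$, so $Y$ lies almost surely in the shift-invariant cone $C_{\mathrm{inv}}(g_a)=\{\kappa\, g_a(\cdot-s):\kappa\geq 0,\ s\in\bbR\}$. A continuous-time analogue of Lemma~\ref{prop:crit}, which depends only on the four properties of the exponent measure (homogeneity, normalization, shift-invariance, support in $C_{\mathrm{inv}}(g_a)$) and whose computation is insensitive to the replacement of $\bbZ$ by $\bbR$, then identifies the distribution of $\eta$ with that of $Z_a$.

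The delicate step is the third one, specifically the selection of a c\`ad-l\`ag spectral process $Y$ and the verification that the random starting times $T_{q_n}$ form a monotone sequence converging to a measurable random variable $T$ that is consistent with the sample-path regularity of $Y$. Once the rational formula $Y(t)=c\,a^{t-T}\mathbf{1}_{\{t\geq T\}}$ is established pointwise on $\bbQ$, the c\`ad-l\`ag extension and the cone identification are routine; what takes care is ensuring that the representation \eqref{eq:deHaan} can genuinely be chosen with c\`ad-l\`ag summands $Y_i$, so that the pointwise statements on $\bbQ$ propagate to $\bbR$ without measurability obstructions.
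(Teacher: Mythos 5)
Your reduction to discrete grids is exactly the paper's strategy: sample $\eta$ on $h\bbZ$, invoke Theorem~\ref{theo1} to get a parameter $a(h)$ and a direction, use the subsampling identity $a(kh)=a(h)^k$ with preservation of direction, rule out $a(h)=0$ by the c\`ad-l\`ag property (your diagonal-versus-product argument is the same contradiction the paper draws from $\eta(1/n)-\eta(0)\not\to 0$ in law), and reduce the backward case to the forward one by time reversal. Up to this point your argument is correct and matches the paper.

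Where you diverge is the final identification, and that is where the gaps are. Having established that $(\eta(qt))_{t\in\bbZ}$ is a forward max-AR$(1)$ with parameter $a^{q}$ for every $q\in\bbQ_{>0}$, the short way to finish --- and the paper's way --- is to observe (Lemma~\ref{lem:theo2}) that $(Z_a(qt))_{t\in\bbZ}$ is \emph{also} a max-AR$(1)$ with parameter $a^{q}$, so the finite-dimensional distributions of $\eta$ and $Z_a$ agree on $\bbQ$, and two c\`ad-l\`ag processes with the same finite-dimensional distributions on a dense set coincide in law on $\cD(\bbR,\bbR)$ (Billingsley, Theorem~14.5). You already have every ingredient for this, but instead you attempt to reconstruct the spectral process pathwise, and that route leans on two claims that are not routine. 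First, the existence of a c\`ad-l\`ag version of the spectral process $Y$ realizing \eqref{eq:deHaan} is asserted in parentheses but is a genuine theorem about max-stable processes in Skorohod space, nowhere available in the paper; without it your pointwise identity $Y(t)=c\,a^{t-T}1_{\{t\geq T\}}$ on $\bbQ$ does not propagate to $\bbR$. Second, the ``continuous-time analogue of Lemma~\ref{prop:crit}'' is not insensitive to replacing $\bbZ$ by $\bbR$: the proof of that lemma writes the invariant cone as an at most countable union of rays $C(f_s)$, $s\in\bbZ$, and decomposes $\mu=\sum_s\mu_s$; for $C_{\mathrm{inv}}(g_a)$ with $s$ ranging over $\bbR$ the union is uncountable, the sum must be replaced by a disintegration of the exponent measure over the base $\bbR\times(0,\infty)$, and shift-invariance plus homogeneity must be shown to force the intensity $c\,ds\,u^{-2}du$. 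Both points can be repaired, but as written they are real gaps --- and unnecessary ones, since the distributional comparison with $Z_a$ on rational grids closes the proof without ever touching the spectral process.
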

Equality in distribution is meant in the sense of equality of laws in the Skohorod space $\cD(\bbR,\bbR)$ with the $J_1$-topology. 
If the max-stable Markov process is $\eta$ is not supposed c\`ad-l\`ag but only continuous in probability, the result still holds in the sense of equality of the finite dimensional distributions.

For the proof of Theorem \ref{theo2}, the following Lemma will be useful.
\begin{lemma}\label{lem:theo2}
For all $\varepsilon>0$, the discrete time process $Z_{a}^{\varepsilon}=(Z_a(\varepsilon t))_{t\in\bbZ}$ is a max-AR(1) process with parameter $a^\varepsilon$.
\end{lemma}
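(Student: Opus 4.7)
The plan is to exploit the multiplicative self-similarity of the shape function $g_a(t)=-\log(a)\,a^t 1_{\{t\ge 0\}}$ under shifts by multiples of $\varepsilon$. Setting $b:=a^\varepsilon$, the key identity is
\[
g_a(\varepsilon m - T) \;=\; b^{m-n}\,g_a(\varepsilon n - T)
\]
whenever $T \le \varepsilon n \le \varepsilon m$, since both arguments are nonnegative and the exponent differs by $\varepsilon(m-n)$. The case $a=1$ (hence $b=1$) is trivial because $Z_1$ is constant and coincides with $X_1$, so I restrict to $a\in(0,1)$.

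First, I would partition the Poisson point process $\{(U_i,T_i)\}$ defining $Z_a$ according to which strip $I_n:=(\varepsilon(n-1),\varepsilon n]$ contains $T_i$. Applying the self-similarity identity inside each strip yields, for any $m\in\bbZ$,
\[
Z_a(\varepsilon m) \;=\; \bigvee_{n\le m} b^{m-n}\, F_n^{(\varepsilon)},\qquad F_n^{(\varepsilon)} \;:=\; \bigvee_{i:\,T_i\in I_n} U_i\,g_a(\varepsilon n - T_i).
\]
The random variables $(F_n^{(\varepsilon)})_{n\in\bbZ}$ are mutually independent because the strips $I_n$ are disjoint and the Poisson process scatters independently on disjoint sets. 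A direct computation of the intensity
\[
\int_{I_n}\!\int_0^\infty \! 1_{\{u g_a(\varepsilon n - t) > y\}}\,u^{-2}\,du\,dt \;=\; \frac{1}{y}\int_0^\varepsilon (-\log a)\,a^s\,ds \;=\; \frac{1-b}{y}
\]
then gives $\bbP[F_n^{(\varepsilon)} \le y] = \exp(-(1-b)/y)$, so $\hat F_n := F_n^{(\varepsilon)}/(1-b)$ defines an i.i.d.\ family of standard $1$-Fr\'echet random variables indexed by $\bbZ$.

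Substituting back yields $Z_a(\varepsilon m) = \bigvee_{n\le m}(1-b)\,b^{m-n}\,\hat F_n$, which is precisely the formula \eqref{eq:maxAR} defining the max-AR(1) process with parameter $b = a^\varepsilon$. The only mildly delicate point is that the max over infinitely many strips must be almost surely finite; this follows from $\int_0^\infty g_a(s)\,ds = 1$ and is in any case equivalent to the $1$-Fr\'echet marginal law of $Z_a(\varepsilon m)$ already used in the proof of Lemma~\ref{lem:theo2prep}. An alternative route would be to iterate the one-step identity \eqref{eq:ARcont} at times $t=\varepsilon n$ together with the past-future independence noted there, but the partition argument above is more transparent and delivers the i.i.d.\ innovation structure in one stroke.
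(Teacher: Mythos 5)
Your proof is correct, and the computational core is the same as the paper's: the Poisson integral $\int_{I_n}\int_0^\infty 1_{\{u g_a(\varepsilon n-t)>y\}}u^{-2}\,du\,dt=(1-a^\varepsilon)/y$ identifying the innovation law is exactly the computation the paper performs for $F_a(t,s)$ in \eqref{eq:ARcont2}, and your $F_n^{(\varepsilon)}$ coincides with the paper's $F_a(\varepsilon(n-1),\varepsilon)$. Where you differ is in the packaging: the paper verifies the one-step autoregressive recursion $Z_a^\varepsilon(t+1)=a^\varepsilon Z_a^\varepsilon(t)\vee(1-a^\varepsilon)F_t$ via \eqref{eq:ARcont} and the past/future independence established in Lemma~\ref{lem:theo2prep}, and then declares the process to be max-AR(1); you instead partition the Poisson points into the strips $I_n$ and read off the full moving-maximum representation $Z_a(\varepsilon m)=\bigvee_{n\leq m}(1-b)b^{m-n}\hat F_n$ in one stroke. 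Your route has the small advantage of landing directly on the defining formula \eqref{eq:maxAR}, whereas passing from the recursion to that representation strictly requires the extra (easy, but unstated in the paper) observation that the remainder $a^{\varepsilon k}Z_a^\varepsilon(t-k)$ vanishes as $k\to\infty$; the paper's route has the advantage of reusing machinery already set up in Lemma~\ref{lem:theo2prep}. Your verification of the self-similarity identity $g_a(\varepsilon m-T)=b^{m-n}g_a(\varepsilon n-T)$ for $T\leq\varepsilon n\leq\varepsilon m$, the independence over disjoint strips, and the handling of $a=1$ are all sound, so there is nothing to fix.
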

\begin{proof}[Proof of Lemma \ref{lem:theo2}]
The random variable $F_a(t,s)$ given by \eqref{eq:ARcont2} has a $1$-Fréchet distribution with scale parameter $1-a^s$  since
\begin{eqnarray*}
\bbP[F_a(t,s)\leq x]
&=& \bbP\Big[\bigvee_{i\geq 1} U_i g_a(t+ s-T_i)1_{\{T_i>  t\}}\leq x\Big] \\
&=& \exp\Big(-\int_{\bbR}\int_0^{+\infty} 1_{\{u(-\log(a))a^{t+s-\tau}>x\}}1_{\{t<\tau\leq t+s\}} u^{-2}dud\tau\Big)\\
&=& \exp\big(-(1-a^s) /x \big).
\end{eqnarray*}
Using this, one proves easily that the random variables $F_t=F_a(\varepsilon t,\varepsilon)/(1-a^\varepsilon)$, $t\in\bbZ$, are i.i.d. with standard Fréchet distribution. Equation \eqref{eq:ARcont} entails 
\[
 Z_a^{\varepsilon}(t+1)= a^{\varepsilon}Z_a^{\varepsilon}(t) \bigvee (1 - a^{\varepsilon}) F_t ,\quad t\in\bbZ
\]
so that $Z_{a}^{\varepsilon}$ is a max-AR(1) process with parameter $a^\varepsilon$.
\end{proof}

\begin{proof}[Proof of Theorem \ref{theo2}]
The discrete time process $\eta^1=(\eta(t))_{t\in\bbZ}$ extracted from the continuous time process $\eta$ is a stationary simple max-stable process on $\bbZ$ satisfying the Markov property. By Theorem \ref{theo1}, it is equal in distribution either to a max-AR(1) process $X_a$ with $a\in [0,1]$ or a time reversed max-AR(1) process $\check X_a$ with $a\in (0,1)$. 
\begin{itemize}
\item In the case $\eta^1\stackrel{d}= X_a$ with $a\in (0,1]$, we prove that $\eta\stackrel{d}= Z_a$.\\
The process $\eta^{1/n}=(\eta(t/n))_{t\in\bbZ}$ is stationary simple max-stable and Markov. Theorem \ref{theo1} entails that $\eta^{1/n}$ is either a max-AR(1) process $X_{a_n}$ with $a_n\in [0,1]$ or a time reversed max-AR(1) process $\check X_{a_n}$ with $a_n\in (0,1)$. Using the relation $(\eta^1(t))_{t\in\bbZ}=(\eta^{1/n}(nt))_{t\in\bbZ}$, we prove easily that $\eta^{1/n}$ must be a max-AR(1) process with parameter $a_n=a^{1/n}$. Indeed, in all other cases, the process $(\eta^{1/n}(nt))_{t\in\bbZ}$ is not a max-AR(1) process with parameter $a$. By Lemma \ref{lem:theo2}, the process $Z_a^{1/n}$ is also a max-AR(1) process with parameter $a^{1/n}$ so that the processes $(\eta(t/n))_{t\in\bbZ}$ and $(Z_a(t/n))_{t\in\bbZ}$ have the same distribution.
Since this holds true for all $n\geq 1$, we easily see that, for all rational numbers $t_1,\ldots,t_p \in \bbQ$, the random vectors $(\eta(t_1),\ldots,\eta(t_p))$ and $(Z_a(t_1),\ldots,Z_a(t_p))$ have the same distribution. Together with the property that both $\eta$ and $Z_a$ have c\`ad-l\`ag sample paths, this implies that $\eta$ and $Z_a$ have the same distribution in the Skohorod space $\cD(\bbR,\bbR)$(see Billingsley \cite{B68} theorem 14.5).
\item We show that the case $\eta^1\stackrel{d}= X_0$ can not occur.\\
Indeed, if $\eta^1\stackrel{d}= X_0$, it holds also $\eta^{1/n}\stackrel{d}= X_0$ for all $n\geq 1$. This proves that the random variables $\eta(t), t\in\bbQ$ are independent with standard Fr\'echet distribution. This contradicts the fact that $\eta$ has c\'ad-l\`ag sample paths since the difference $\eta(1/n)-\eta(0)$ should  converge in law to zero as $n\to +\infty$.
\item In the case $\eta^1\stackrel{d}= \check X_a$ with $a\in (0,1)$, we prove that $\eta\stackrel{d}= \check Z_a$.\\
Indeed, the time reversed process $\check \eta=(\eta(-t^-))_{t\in\bbZ}$ is then stationary simple max-stable and Markov and such that $(\check \eta(t))_{t\in\bbZ}$ is a max-AR(1) process with parameter $a$. Hence $\check \eta$ and $Z_a$ have the same distribution, whence $\eta$ and $\check Z_a$ have the same distribution.
 \end{itemize}
\end{proof}

\nocite{Leb08}
\nocite{KSdH09}
\nocite{R08}
\nocite{Sm92}

\bibliographystyle{plain}
\bibliography{%/home/eyiminko/Desktop/Documentation/biblio/Biblio22.bib
Biblio22}
\end{document}